\newcommand{\eps}{\varepsilon}
\renewcommand{\phi}{\varphi}
\newcommand{\N}{\mathbb{N}}
\newcommand{\R}{\mathbb{R}}
\newcommand{\Rbar}{\overline{\mathbb{R}}}
\newcommand{\calG}{\mathcal{G}}
\newcommand{\dual}[1]{\langle #1 \rangle}
\newcommand{\norm}[1]{\| #1 \|}
\newcommand{\set}[2]{\left\{#1:#2\right\}}
\newcommand{\wkto}{\rightharpoonup}
\DeclareMathOperator{\dom}{\mathrm{dom}}
\DeclareMathOperator{\ran}{\mathrm{ran}}
\DeclareMathOperator{\Id}{\mathrm{Id}}
\DeclareMathOperator{\co}{\mathrm{co}}
\pgfplotsset{compat=newest}
\pgfplotsset{plot coordinates/math parser=false}
\title{Convex regularization of discrete-valued inverse problems}
\author{Christian Clason\thanks{%
        University of Duisburg-Essen, Faculty of Mathematics,
        Thea-Leymann-Str.~9,
        45127 Essen, Germany
    (\email{christian.clason@uni-due.de}, \email{tram.do@uni-due.de})}
    \and 
    Thi Bich Tram Do\footnotemark[1]
}
\date{July 26, 2017}
\begin{document}

\maketitle

\begin{abstract}
    This work is concerned with linear inverse problems where a distributed parameter is known \emph{a priori} to only take on values from a given discrete set. This property can be promoted in Tikhonov regularization with the aid of a suitable convex but nondifferentiable regularization term. This allows applying standard approaches to show well-posedness and convergence rates in Bregman distance. Using the specific properties of the regularization term, it can be shown that convergence (albeit without rates) actually holds pointwise. Furthermore, the resulting Tikhonov functional can be minimized efficiently using a semi-smooth Newton method. Numerical examples illustrate the properties of the regularization term and the numerical solution.
\end{abstract}

\section{Introduction}
\label{sec:introduction}

We consider Tikhonov regularization of inverse problems, where the unknown parameter to be reconstructed is a distributed function that only takes on values from a given discrete set (i.e., the values are known, but not in which points they are attained). Such problems can occur, e.g., in nondestructive testing or medical imaging; a similar task also arises as a sub-step in segmentation or labelling problems in image processing. The question we wish to address here is the following: If such strong \emph{a priori} knowledge is available, how can it be incorporated in an efficient manner? Specifically, if $X$ and $Y$ are function spaces, $F:X\to Y$ denotes the parameter-to-observation mapping, and $y^\delta \in Y$ is the given noisy data, we would wish to solve the constrained Tikhonov functional
\begin{equation}\label{eq:intro:nonconvex}
    \min_{u\in U} \frac12\norm{F(u) - y^\delta}_Y 
\end{equation}
for
\begin{equation}\label{eq:intro:nonconvex_constraint}
    U := \set{u\in X}{u \in \{u_1,\dots,u_d\} \text{ pointwise}},
\end{equation}
where $u_1,\dots,u_d\in\R$ are the known parameter values. However, this set is nonconvex, and hence the functional in \eqref{eq:intro:nonconvex} is not weakly lower-semicontinuous and can therefore not be treated by standard techniques. (In particular, it will in general not admit a minimizer.) A common strategy to deal with such problems is by convex relaxation, i.e., replacing $U$ by its convex hull
\begin{equation}
    \co U = \set{u\in X}{u \in [u_1,u_d] \text{ pointwise}}.
\end{equation}
This turns \eqref{eq:intro:nonconvex} into a classical \emph{bang-bang} problem, whose solution is known to generically take on only the values $u_1$ or $u_d$; see, e.g., \cite{Troeltzsch:1979,Troeltzsch:1996}.
If $d>2$, intermediate parameter values are therefore lost in the reconstruction. (Here we would like to remark that a practical regularization should not only converge as the noise level tends to zero but also yield informative reconstructions for fixed -- and ideally, a large range of -- noise levels.) 
As a remedy, we propose to add a convex regularization term that promotes reconstructions in $U$ (rather than merely in $\co U$) for the convex relaxation. Specifically, we choose the convex integral functional
\begin{equation}
    \calG:X\to\R,\qquad \calG(u) := \int g(u(x))\,dx,
\end{equation}
for a convex integrand $g:\R\to\R$ with a polyhedral epigraph whose vertices correspond to the known parameter values $u_1,\dots,u_d$. Just as in $L^1$ regularization for sparsity (and in linear optimization), it can be expected that minimizers are found at the vertices, thus yielding the desired structure. 

This approach was first introduced in \cite{CK:2013} in the context of linear optimal control problems for partial differential equations, where the so-called \emph{multi-bang} (as a generalization of bang-bang) penalty $\calG$ was obtained as the convex envelope of a (nonconvex) $L^0$ penalization of the constraint $u\in U$. The application to nonlinear control problems and the limit as the $L^0$ penalty parameter tends to infinity were considered in \cite{CK:2015}, and our particular choice of $\calG$ is based on this work. The extension of this approach to vector-valued control problems was carried out in \cite{CTW:2016}.

Our goal here is therefore to investigate the use of the multi-bang penalty from \cite{CK:2015} as a regularization term in inverse problems, in particular addressing convergence and convergence rates as the noise level and the regularization parameter tend to zero. Due to the convexity of the penalty, these follow from standard results on convex regularization if convergence is considered with respect to the Bregman distance. The main contribution of this work is to show that due to the structure of the pointwise penalty, this convergence can be shown to actually hold pointwise. Since the focus of our work is the novel convex regularization term, we restrict ourselves to linear problems for the sake of presentation. However, all results carry over in a straightforward fashion to nonlinear problems. Finally, we describe following \cite{CK:2013,CK:2015} the computation of Tikhonov minimizers using a path-following semismooth Newton method.

\bigskip

Let us briefly mention other related literature. Regularization with convex nonsmooth functionals is now a widely studied problem, and we only refer to the monographs \cite{Scherzer:2009,HKKS,ItoJin} as well as the seminal works \cite{burger2004convergence,Poeschl:2007,Resmerita:2008,Flemming:2011}.
To the best of our knowledge, this is the first work treating regularization of general inverse problems with discrete-valued distributed parameters. As mentioned above, similar problems occur frequently in image segmentation or, more generally, image labelling problems. The former are usually treated by (multi-phase) level set methods \cite{Vese2002} or by a combination of total variation minimization and thresholding \cite{Cai:2013}. More general approaches to image labelling problems are based on graph-cut algorithms \cite{Ishikawa:2003,Bae2009} or, more recently, vector-valued convex relaxation \cite{Goldluecke2010,Lellmann:2011}. Both multi-phase level sets and vector-valued relaxations, however, have the disadvantage that the dimension of the parameter space grows quickly with the number of admissible values, which is not the case in our approach. On the other hand, our approach assumes, similar to \cite{Ishikawa:2003}, a linear ordering of the desired values which is not necessary in the vector-valued case; see also \cite{CTW:2016}.

\bigskip

This work is organized as follows. In \cref{sec:multibang}, we give the concrete form of the pointwise multi-bang penalty $g$ and summarize its relevant properties. \Cref{sec:convergence} is concerned with well-posedness, convergence, and convergence rates of the corresponding Tikhonov regularization. Our main result, the pointwise convergence of the regularized solutions to the true parameter, is the subject of \cref{sec:pointwise}. We also briefly discuss the structure of minimizers for given $y^\delta$ and fixed $\alpha>0$ in \cref{sec:structure}. Finally, we address the numerical solution of the Tikhonov minimization problem using a semismooth Newton method in \cref{sec:semismooth} and apply this approach to an inverse source problem for a Poisson equation in \cref{sec:examples}.

\section{Multi-bang penalty}\label{sec:multibang}

Let $u_1<\cdots<u_d\in\R$, $d\geq 2$, be the given admissible parameter values and $\Omega\subset \R^n$, $n\in\N$, be a bounded domain. Following \cite[\S\,3]{CK:2015}, we define the corresponding multi-bang penalty
\begin{equation}
    \calG:L^2(\Omega)\to\Rbar,\qquad \calG(u) = \int_\Omega g(u(x))\,dx,
\end{equation}
for $g:\R\to\Rbar$ defined by
\begin{equation}
    g(v) =  
    \begin{cases}
        \frac12 \left((u_{i}+u_{i+1})v - u_iu_{i+1}\right) & \text{if }v \in [u_i,u_{i+1}], \quad1\leq i < d,\\
        \infty & \text{else}.
    \end{cases}
\end{equation}
(Note that we have now included the convex constraint $u\in \co U$ in the definition of $\calG$.) This choice can be motivated as the convex hull of $\frac12\norm{\cdot}_{L^2(\Omega)}^2 + \delta_U$, where $\delta_U$ denotes the indicator function of the set $U$ defined in \eqref{eq:intro:nonconvex_constraint} in the sense of convex analysis, i.e., $\delta_U(u) = 0$ if $u\in U$ and $\infty$ else; see \cite[\S\,3]{CK:2015}. Setting 
\begin{equation}
    g_i(v):= \frac12 \left((u_{i}+u_{i+1})v - u_iu_{i+1}\right),\qquad 1\leq i<d,
\end{equation}
it is straightforward to verify that
\begin{equation}
    g(v) = \max_{1\leq i <d} g_i(v),\qquad v\in [u_1,u_d],
\end{equation}
and hence $g$ is the pointwise supremum of affine functions and therefore convex and continuous on the interior of its effective domain $\dom g = [u_1,u_d]$.

We can thus apply the sum rule and maximum rule of convex analysis (see, e.g., \cite[Props.~4.5.1 and 4.5.2, respectively]{Schirotzek:2007}), and obtain for the convex subdifferential at $v\in \dom g$ that
\begin{equation}
    \begin{aligned}
        \partial g(v) &= \partial\left(\max_{1\leq i<d}g_i + \delta_{[u_1,u_d]}\right)(v)\\
                      &= \partial \left(\max_{1\leq i< d} g_i\right)(v) + \partial\delta_{[u_1,u_d]}(v) \\
                      &= 
        \co \left(\bigcup_{i:g(v)=g_i(v)}g'_i(v)\right) + \partial\delta_{[u_1,u_d]}(v).
    \end{aligned}
\end{equation}
Using the definition of $g_i$ together with the classical characterization of the subdifferential of an indicator function via its normal cone yields the explicit characterization
\begin{equation}\label{eq:multibang:dg}
    \partial g(v) =  \begin{cases}
        \left(-\infty,\tfrac12 (u_1+u_2)\right] & \text{if }v = u_1,\\
        \left\{\tfrac12(u_i+u_{i+1})\right\} & \text{if }v\in (u_i,u_{i+1}),\quad 1\leq i < d,\\
        \left[\tfrac12(u_{i-1}+u_{i}),\tfrac12(u_{i}+u_{i+1})\right] & \text{if }v = u_{i},\qquad\qquad\!\!\! 1< i<d,\\
        \left[\tfrac12(u_{d-1}+u_d),\infty\right) & \text{if }v = u_d,\\
        \emptyset &\text{else}.
    \end{cases}
\end{equation}

In \cref{sec:structure,sec:semismooth}, we will also make use of the subdifferential of the Fenchel conjugate $g^*$ of $g$. Here we can use the fact that $g$ is convex and hence $q\in \partial g(v)$ if and only if $v\in \partial g^*(q)$ (see, e.g., \cite[Prop.~4.4.4]{Schirotzek:2007}) to obtain 
\begin{equation}\label{eq:multibang:dgstar}
    \partial g^*(q) \in \begin{cases}
        \{u_1\} & \text{if }q \in \left(-\infty,\tfrac12(u_1+u_2)\right),\\
        [u_i,u_{i+1}] &\text{if } q = \tfrac12(u_{i}+u_{i+1}),\qquad\qquad\qquad\quad 1\leq i<d,\\
        \{u_i\} &\text{if } q \in \left(\tfrac12(u_{i-1}+u_{i}),\tfrac12(u_{i}+u_{i+1})\right), \quad 1< i<d,\\
        \{u_d\} &\text{if } q \in \left(\tfrac12(u_{d-1}+u_d),\infty\right),\\
        \emptyset &\text{else.}
    \end{cases}
\end{equation}
(Note that subdifferentials are always closed.)
We illustrate these characterizations for a simple example in \cref{fig:multibang}.
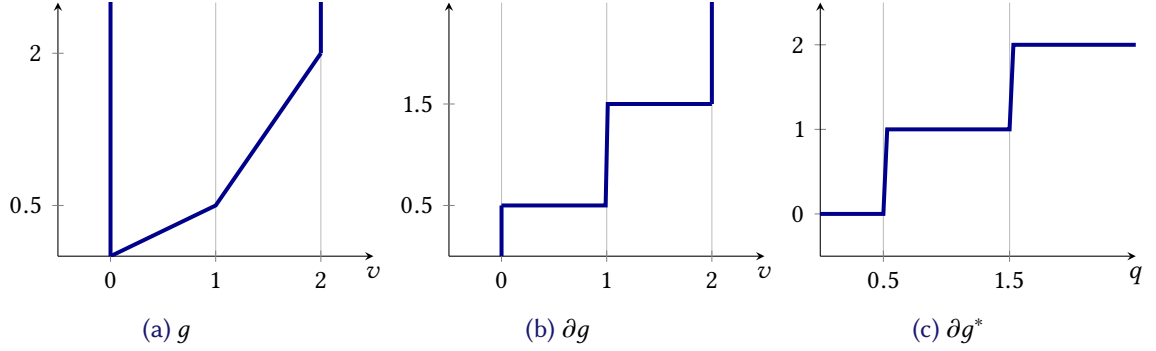
\begin{figure}[t]
    \begin{subfigure}[b]{0.3\textwidth}
        \centering
        \begin{tikzpicture}[baseline,style={font=\small}]
            \begin{axis}[%
                width=1.3\textwidth,
                xmin=-0.5,
                xmax=2.5,
                ymax=2.5,
                xlabel style={at={(axis cs:2.5,0)}},
                xlabel={$v$},
                xtick=\empty,
                extra x tick style={grid=major},
                extra x ticks={0,1,2},
                extra x tick labels={$0$,$1$,$2$},
                ytick=\empty,
                extra y ticks={0.5,2},
                extra y tick labels={$0.5$,$2$},
                axis y line=left,
                axis x line=bottom,
                ]
                \addplot[%
                domain=0:2,
                color=DarkBlue,solid,line width=1.5pt,
                ]
                {
                    and(x>0,x<=1)*(0.5*((0+1)*x-0*1))+%
                    and(x>1,x<=2)*(0.5*((1+2)*x-1*2))%
                };
                \addplot[color=DarkBlue,solid,line width=1.5pt] coordinates {(0,0) (0,3)};
                \addplot[color=DarkBlue,solid,line width=1.5pt] coordinates {(2,2) (2,3)};
            \end{axis}
        \end{tikzpicture}  
        \caption{$g$}\label{fig:multibang:g}
    \end{subfigure}
    \hfill
    \begin{subfigure}[b]{0.3\linewidth}
        \centering
        \begin{tikzpicture}[baseline,style={font=\small}]
            \begin{axis}[%
                width=1.3\linewidth,
                xmin=-0.5,
                xmax=2.5,
                ymax=2.5,
                xlabel style={at={(axis cs:2.5,0)}},
                xlabel={$v$},
                xtick=\empty,
                extra x tick style={grid=major},
                extra x ticks={0,1,2},
                extra x tick labels={$0$,$1$,$2$},
                ytick=\empty,
                extra y ticks={0.5,1.5},
                extra y tick labels={$0.5$,$1.5$},
                axis y line=left,
                axis x line=bottom,
                ]
                \addplot[%
                samples=100,
                domain=0:2,
                color=DarkBlue,solid,line width=1.5pt,
                ]
                {
                    and(x>=0,x<1)*(0.5*(0+1))+%
                    and(x>=1,x<2)*(0.5*(1+2))%
                };
                \addplot[color=DarkBlue,solid,line width=1.5pt] coordinates {(0,0) (0,0.5)};
                \addplot[color=DarkBlue,solid,line width=1.5pt] coordinates {(2,1.5) (2,3)};
            \end{axis}
        \end{tikzpicture}
        \caption{$\partial g$}\label{fig:multibang:dg}
    \end{subfigure}
    \hfill
    \begin{subfigure}[b]{0.3\linewidth}
        \centering
        \begin{tikzpicture}[baseline,style={font=\small}]
            \begin{axis}[%
                width=1.3\linewidth,
                xmin=0,
                xmax=2.5,
                ymin=-0.5,
                ymax=2.5,
                xlabel style={at={(axis cs:2.5,-0.5)}},
                xlabel={$q$},
                xtick=\empty,
                extra x tick style={grid=major},
                extra x ticks={0.5,1.5},
                extra x tick labels={$0.5$,$1.5$},
                axis y line=left,
                axis x line=bottom,
                ]
                \addplot[%
                samples=100,
                domain=-0.5:2.5,
                color=DarkBlue,solid,line width=1.5pt,
                ]
                {
                    (x<=0.5)*(0)+%
                    and(x>0.5,x<=1.5)*(1)+%
                    (x>1.5)*(2)%
                };
            \end{axis}
        \end{tikzpicture}   
        \caption{$\partial g^*$}\label{fig:multibang:dgstar}
    \end{subfigure}

    \caption{Structure of pointwise multibang penalty for the choice $(u_1,u_2,u_3)=(0,1,2)$}
    \label{fig:multibang}
\end{figure}

Finally, since $g$ is proper, convex, and lower semi-continuous by construction, the corresponding integral functional $\calG:L^2(\Omega)\to\Rbar$ is proper, convex and weakly lower semicontinous as well; see, e.g., \cite[Proposition 2.53]{Barbu}. Furthermore, the sub\-differential can be computed pointwise as
\begin{equation}\label{eq:multibang:dG}
    \partial\calG(u) = \set{v\in L^2(\Omega)}{v(x) \in \partial g(u(x))\quad\text{for almost every }x\in\Omega},
\end{equation}
see, e.g., \cite[Prop.~2.53]{Barbu}. The same is true for the Fenchel conjugate $\calG^*:L^2(\Omega)\to\Rbar$ and hence for $\partial\calG^*$ (which is thus an element of $L^\infty(\Omega)$ instead of $L^2(\Omega)$); see, e.g., \cite[Props.~IV.1.2, IX.2.1]{Ekeland:1999a}.

\section{Multi-bang regularization}\label{sec:convergence}

We consider for a linear operator $K:X\to Y$ between the Hilbert spaces $X=L^2(\Omega)$ and $Y$ and exact data $y^\dag \in Y$ the inverse problem of finding $u\in X$ such that
\begin{equation}\label{eq:inverse_prob}
    Ku=y^\dag.
\end{equation}
We assume that $K$ is weakly closed, i.e., $u_n\wkto u$ and $Ku_n\wkto y$ imply $y=Ku$.
For the sake of presentation, we also assume that \eqref{eq:inverse_prob} admits a solution $u^\dag\in X$. Let now $y^\delta \in Y$ be given noisy data with $\norm{y^\delta -y^\dag}_Y\leq \delta$ for some noise level $\delta >0$. The \emph{multi-bang regularization} of \eqref{eq:inverse_prob} for $\alpha>0$ then consists in solving
\begin{equation}\label{eq:tikhonov}
    \min_{u\in X} \frac12\norm{Ku-y^\delta}_Y^2 + \alpha \calG(u).
\end{equation}
Since $\calG$ is proper, convex and semi-continuous with bounded effective domain $\co U$, and $K$ is weakly closed, the following results can be proved by standard semi-continuity methods; see also \cite{CK:2015,CTW:2016}.
\begin{proposition}[Existence and uniqueness]
    For every $\alpha >0$, there exists a minimizer $u_\alpha^\delta$ to  \eqref{eq:tikhonov}. If $K$ is injective, this minimizer is unique.
\end{proposition}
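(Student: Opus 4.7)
The plan is to apply the direct method of the calculus of variations, which is standard for Tikhonov functionals of this form.

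First I would verify that the objective $J(u) := \frac12\norm{Ku-y^\delta}_Y^2 + \alpha \calG(u)$ is proper and bounded below by $0$, so that $\inf J \in [0,\infty)$ is attained on some minimizing sequence $\{u_n\}\subset X$. Since $\alpha\calG(u_n)\leq J(u_n) \leq C$, boundedness of $\calG(u_n)$ forces $u_n \in \dom\calG = \set{u\in L^2(\Omega)}{u(x)\in[u_1,u_d] \text{ a.e.}}$ for all $n$ large enough; because $\Omega$ is bounded, this set is bounded in $X=L^2(\Omega)$. By reflexivity of the Hilbert space $X$, we may extract a subsequence (not relabelled) with $u_n \wkto u_\alpha^\delta \in X$. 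Similarly, $\frac12\norm{Ku_n - y^\delta}_Y^2 \leq J(u_n)$ is bounded, so $\{Ku_n\}$ is bounded in $Y$ and, passing to a further subsequence, $Ku_n \wkto z$ for some $z\in Y$. The weak closedness assumption on $K$ then yields $z=Ku_\alpha^\delta$, i.e., $Ku_n\wkto Ku_\alpha^\delta$.

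Next I would invoke weak lower semicontinuity to pass to the limit. The norm $\norm{\cdot - y^\delta}_Y^2$ is convex and continuous, hence weakly lower semicontinuous, giving
\begin{equation*}
\tfrac12\norm{Ku_\alpha^\delta - y^\delta}_Y^2 \leq \liminf_{n\to\infty} \tfrac12\norm{Ku_n - y^\delta}_Y^2.
\end{equation*}
Combining this with the weak lower semicontinuity of $\calG$ on $L^2(\Omega)$ established at the end of \cref{sec:multibang} yields $J(u_\alpha^\delta) \leq \liminf J(u_n) = \inf J$, so $u_\alpha^\delta$ is a minimizer.

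For uniqueness under the assumption that $K$ is injective, I would argue by strict convexity. The term $\calG$ is convex (as an integral of the convex integrand $g$), and the quadratic term $u\mapsto \tfrac12\norm{Ku-y^\delta}_Y^2$ is strictly convex on $X$: if $u\neq \tilde u$, then $Ku\neq K\tilde u$ by injectivity, and strict convexity of $\norm{\cdot - y^\delta}_Y^2$ on the Hilbert space $Y$ gives strict inequality along the segment between $u$ and $\tilde u$. The sum $J$ is therefore strictly convex, so a minimizer, if it exists, is unique.

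The only delicate point is the passage to the limit in the data-misfit term, since the only hypothesis on $K$ is weak closedness rather than bounded linearity; the argument above circumvents this by using the a priori bound on $\norm{Ku_n}_Y$ derived from the minimizing property to extract a weak limit of $Ku_n$ and then identifying it via weak closedness. Everything else reduces to applying the weak lower semicontinuity of $\calG$ already recorded in \cref{sec:multibang}.
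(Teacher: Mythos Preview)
Your argument is correct and is precisely the ``standard semi-continuity method'' the paper invokes (without giving details) just before the proposition; in particular, your use of the bounded effective domain $\dom\calG=\co U$ for coercivity, the weak lower semicontinuity of $\calG$ from the end of \cref{sec:multibang}, and the weak closedness of $K$ to identify the limit of $Ku_n$ are exactly the ingredients the paper has set up. The only remark is that since $K$ is linear and everywhere defined, weak closedness together with the closed graph theorem already makes $K$ bounded and hence weak--weak continuous, so the extra extraction of a weak limit of $\{Ku_n\}$ is not strictly needed---but your more careful route is certainly valid.
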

\begin{proposition}[Stability]
    Let $\{y_n\}_{n\in\N}\subset Y$ be a sequence converging strongly to $y^\delta\in Y$ and $\alpha >0$ be fixed. Then the corresponding sequence of minimizers $\{u_n\}_{n\in\N}$ to \eqref{eq:tikhonov}  contains a subsequence converging weakly to a minimizer $u^\delta_\alpha$.
\end{proposition}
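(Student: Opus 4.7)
The plan is to apply the direct method. First, I would establish that $\{u_n\}_{n\in\N}$ is bounded in $X$: since $\calG(u_n)<\infty$ forces $u_n\in\co U$ almost everywhere, and $\Omega$ is bounded, the sequence lies in a bounded subset of $L^2(\Omega)$. Testing the minimality of $u_n$ against, say, the constant function $v\equiv u_1$ gives
\[
    \frac12\norm{Ku_n-y_n}_Y^2 + \alpha\calG(u_n) \leq \frac12\norm{Kv-y_n}_Y^2 + \alpha\calG(v),
\]
whose right-hand side is bounded (uniformly in $n$) because $y_n\to y^\delta$ strongly. Hence both $\calG(u_n)$ and $\norm{Ku_n-y_n}_Y$ are bounded, and therefore so is $\{Ku_n\}\subset Y$.

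I would then extract a (non-relabeled) subsequence such that $u_n\wkto \bar u$ in $X$ and $Ku_n\wkto z$ in $Y$. The assumed weak closedness of $K$ immediately identifies $z=K\bar u$. Passing to the limit in the optimality inequality
\[
    \frac12\norm{Ku_n-y_n}_Y^2 + \alpha\calG(u_n) \leq \frac12\norm{Kv-y_n}_Y^2 + \alpha\calG(v),\qquad v\in X,
\]
the right-hand side converges to $\frac12\norm{Kv-y^\delta}_Y^2+\alpha\calG(v)$ by strong convergence of $y_n$ and continuity of $z\mapsto\norm{Kv-z}_Y^2$. For the left-hand side, weak lower semicontinuity of $\calG$ (stated in \cref{sec:multibang}) handles the regularizer, while $Ku_n-y_n\wkto K\bar u-y^\delta$ (sum of a weakly and a strongly convergent sequence) combined with weak lower semicontinuity of $\norm{\cdot}_Y^2$ gives $\norm{K\bar u-y^\delta}_Y^2\leq \liminf_n\norm{Ku_n-y_n}_Y^2$. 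Since $v\in X$ was arbitrary, $\bar u$ minimizes \eqref{eq:tikhonov} with data $y^\delta$; we may call it $u^\delta_\alpha$.

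The only step that really requires attention is passing to the limit in the mixed term $\norm{Ku_n-y_n}_Y^2$, where the first factor converges only weakly while the second converges strongly; writing the argument as a single weakly convergent sequence and then invoking the weak lower semicontinuity of the squared norm removes the difficulty. Everything else is a straightforward application of the direct method, made possible by the boundedness of the effective domain $\co U$ and the weak lower semicontinuity properties of $\calG$ already recorded in \cref{sec:multibang}.
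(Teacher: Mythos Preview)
Your argument is correct and is precisely the standard semi-continuity argument the paper has in mind; note that the paper does not actually write out a proof but simply states that the result ``can be proved by standard semi-continuity methods'' with references to \cite{CK:2015,CTW:2016}. Your proposal supplies exactly such a proof, exploiting the boundedness of $\dom\calG=\co U$ for compactness, weak closedness of $K$, and weak lower semicontinuity of both $\calG$ and the squared norm.
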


We now address convergence for $\delta\to0$.
Recall that an element ${u^\dag}\in X$ is called a $\calG$-minimizing solution to \eqref{eq:inverse_prob} if it is a solution to \eqref{eq:inverse_prob} and 
$\calG(u^\dag)\le \calG(u)$ for all solutions $u$ to \eqref{eq:inverse_prob}.
The following result is standard as well; see, e.g., \cite{Scherzer:2009,HKKS,ItoJin}.
\begin{proposition}[Convergence]\label{thm:convergence}
    Let $\{y^{\delta_n}\}_{n\in\N}\subset Y$ be a sequence of noisy data with $\norm{y^{\delta_n}-y^\dag}_Y\leq \delta_n \to 0$, and choose $\alpha_n:=\alpha_n(\delta_n)$ satisfying
    \begin{equation}\label{eq:ass_alpha}
        \lim_{n\to\infty}\frac{\delta_n^2}{\alpha_n}=0 \qquad \text{and}\qquad \lim_{n\to\infty}\alpha_n=0.
    \end{equation}
    Then the corresponding sequence of minimizers  $\{u_{\alpha_n}^{\delta_n}\}_{n\in\N}$ to \eqref{eq:tikhonov} contains a subsequence converging weakly to a $\calG$-minimizing solution $u^\dag$.
\end{proposition}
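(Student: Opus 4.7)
The plan is to follow the classical three-step convergence argument for convex Tikhonov regularization, specialised to the present setting. The ingredients required are (i) weak closedness of $K$, (ii) weak lower semicontinuity of $\calG$, and (iii) boundedness of $\dom\calG = \co U$ in $L^2(\Omega)$; the last of these removes any need for a separate coercivity hypothesis on $\calG$.

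First, I would fix a $\calG$-minimising solution $u^\dag$ to \eqref{eq:inverse_prob}, whose existence follows from the standard direct method: the solution set $\set{u\in X}{Ku=y^\dag}$ is nonempty by assumption, convex and weakly closed by linearity and weak closedness of $K$, and intersects $\dom\calG$ (as is natural for the motivating a priori information); since $\calG$ is proper, convex, weakly lower semicontinuous and has weakly compact effective domain, the infimum of $\calG$ over the solution set is attained. Testing the optimality of $u_{\alpha_n}^{\delta_n}$ against this $u^\dag$ yields
\begin{equation*}
    \tfrac12\norm{Ku_{\alpha_n}^{\delta_n}-y^{\delta_n}}_Y^2 + \alpha_n\calG(u_{\alpha_n}^{\delta_n})
    \le \tfrac12\norm{Ku^\dag-y^{\delta_n}}_Y^2 + \alpha_n\calG(u^\dag)
    \le \tfrac{\delta_n^2}{2} + \alpha_n\calG(u^\dag).
\end{equation*}

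From this single inequality I would extract the two decisive consequences. Dividing by $\alpha_n$ and using $\delta_n^2/\alpha_n\to 0$ gives $\limsup_{n\to\infty}\calG(u_{\alpha_n}^{\delta_n})\le\calG(u^\dag)$; dropping the penalty term and combining $\alpha_n\to 0$ with the triangle inequality $\norm{Ku_{\alpha_n}^{\delta_n}-y^\dag}_Y\le\norm{Ku_{\alpha_n}^{\delta_n}-y^{\delta_n}}_Y+\delta_n$ gives $Ku_{\alpha_n}^{\delta_n}\to y^\dag$ strongly in $Y$. Since $\dom\calG=\co U\subset[u_1,u_d]$ pointwise on the bounded domain $\Omega$, the minimisers are uniformly bounded in $X=L^2(\Omega)$, so a subsequence converges weakly to some $\bar u\in X$. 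Weak closedness of $K$ (applied to the weak convergence $u_{\alpha_{n_k}}^{\delta_{n_k}}\wkto\bar u$ and $Ku_{\alpha_{n_k}}^{\delta_{n_k}}\wkto y^\dag$, implied by its strong convergence) yields $K\bar u=y^\dag$, and weak lower semicontinuity of $\calG$ combined with the $\limsup$-bound gives $\calG(\bar u)\le\calG(u^\dag)$. Minimality of $u^\dag$ then forces $\bar u$ itself to be a $\calG$-minimising solution.

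I do not expect any genuine obstacle: every step relies only on abstract properties of $\calG$ established in \cref{sec:multibang} (proper, convex, weakly lower semicontinuous, bounded effective domain) and the assumption of weak closedness of $K$, and the specific form \eqref{eq:multibang:dg} of the multi-bang penalty plays no role here. The only subtlety worth flagging is the implicit requirement that a solution $u^\dag\in\dom\calG$ exists so that the initial comparison inequality is informative; this is consistent with the motivating scenario in which the true parameter takes values in $U\subset\dom\calG$.
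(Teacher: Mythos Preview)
Your argument is correct and is precisely the standard convex-regularization convergence proof; the paper itself does not give a proof of this proposition but simply cites standard references (e.g., \cite{Scherzer:2009,HKKS,ItoJin}), so there is nothing to compare against beyond noting that your write-up is exactly what one finds there. Your remark about needing a solution in $\dom\calG$ is apt and consistent with the paper's standing assumption that a solution $u^\dag$ exists together with the motivating hypothesis $u^\dag\in U\subset\co U$.
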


For convex nonsmooth regularization terms, convergence rates are usually derived in terms of the Bregman distance \cite{bregman1967relaxation}, which is defined for $u_1,u_2\in X$ and $p_1\in\partial\calG(u_1)$ as
\begin{equation}\label{eq:bregman}
    d_{\calG}^{p_1}(u_2,u_1)=\calG(u_2)-\calG(u_1)-\dual{p_1,u_2-u_1}_X.
\end{equation}
From the convexity of $\calG$, it follows that $d_{\calG}^{p_1}(u_2,u_1)\geq 0$ for all $u_2\in X$. 
Furthermore, we have from, e.g., \cite[Lem.~3.8]{ItoJin} the so-called \emph{three-point identity}
\begin{equation}\label{eq:threepoint} 
    d_{\calG}^{p_1}(u_3,u_1)=d_{\calG}^{p_2}(u_3,u_2)+d_{\calG}^{p_1}(u_2,u_1)+(p_2-p_1)(u_3-u_2)
\end{equation}
for any $u_1,u_2,u_3\in X$ and $p_1\in\calG(u_1)$ and $p_2\in\partial\calG(u_2)$.
Finally, we point out that due to the pointwise characterization \eqref{eq:multibang:dG} of the subdifferential of the integral functional $\calG$, we have that
\begin{equation}\label{eq:bregman_pointwise}
    d_{\calG}^{p}(u_2,u_1) = \int_\Omega d_g^{p(x)}(u_2(x),u_1(x))dx
\end{equation}
for
\begin{equation}
    d_g^q(v_2,v_1) = g(v_2)-g(v_1) - q(v_2-v_1).
\end{equation}

Standard arguments can then be used to show convergence rates for \emph{a priori} and \emph{a posteriori} parameter choice rules under the usual source conditions; see, e.g.,
\cite{burger2004convergence,Resmerita:2008,Scherzer:2009,HKKS,ItoJin}. Here we follow the latter and assume that there exists a $w\in Y$ such that
\begin{equation}\label{eq:source}
    p^\dag:=K^*w\in\partial \calG(u^\dag).
\end{equation}
Under the \emph{a priori} choice rule
\begin{equation}\label{eq:apriori}
    \alpha = c \delta\qquad \text{for some }c>0,
\end{equation}
we obtain the following convergence rate from, e.g., \cite[Cor.~3.4]{ItoJin}.
\begin{proposition}[Convergence rate, \emph{a priori}]
    \label{thm:conv_apriori}
    Assume that the source condition \eqref{eq:source} holds and that $\alpha=\alpha(\delta)$ is chosen according to \eqref{eq:apriori}. Then there exists a $C>0$ such that 
    \begin{equation}
        d_\calG^{p^\dag}(u_{\alpha}^\delta,u^\dag) \leq C\delta.
    \end{equation}
\end{proposition}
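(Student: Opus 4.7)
The plan is to follow the textbook Bregman-distance approach for convex regularization under a source condition, since nothing about $\calG$ beyond convexity, lower semicontinuity, and the availability of a subgradient $p^\dag=K^*w$ is actually needed; the multi-bang structure plays no role here. I would start from the minimizing property of $u_\alpha^\delta$ tested against $u^\dag$,
\begin{equation*}
    \tfrac12\norm{Ku_\alpha^\delta-y^\delta}_Y^2+\alpha\calG(u_\alpha^\delta)\le \tfrac12\norm{Ku^\dag-y^\delta}_Y^2+\alpha\calG(u^\dag),
\end{equation*}
use $Ku^\dag=y^\dag$ and $\norm{y^\dag-y^\delta}_Y\le \delta$ to bound the first term on the right by $\tfrac12\delta^2$, and then rewrite $\calG(u_\alpha^\delta)-\calG(u^\dag)$ by the defining identity \eqref{eq:bregman} as $d_\calG^{p^\dag}(u_\alpha^\delta,u^\dag)+\dual{p^\dag,u_\alpha^\delta-u^\dag}_X$.

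After this rearrangement the inequality reads
\begin{equation*}
    \alpha\,d_\calG^{p^\dag}(u_\alpha^\delta,u^\dag)+\tfrac12\norm{Ku_\alpha^\delta-y^\delta}_Y^2 \le \tfrac12\delta^2 - \alpha\dual{p^\dag,u_\alpha^\delta-u^\dag}_X.
\end{equation*}
The second key step is to exploit the source condition \eqref{eq:source}: substituting $p^\dag=K^*w$ moves the inner product into the data space,
$-\alpha\dual{p^\dag,u_\alpha^\delta-u^\dag}_X = -\alpha\dual{w,Ku_\alpha^\delta-y^\dag}_Y$,
which after inserting $\pm y^\delta$ and applying the triangle and Cauchy--Schwarz inequalities is dominated by $\alpha\norm{w}_Y\,\norm{Ku_\alpha^\delta-y^\delta}_Y+\alpha\norm{w}_Y\,\delta$.

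The third step is to absorb the residual term by Young's inequality $\alpha\norm{w}_Y\norm{Ku_\alpha^\delta-y^\delta}_Y\le \tfrac12\norm{Ku_\alpha^\delta-y^\delta}_Y^2+\tfrac12\alpha^2\norm{w}_Y^2$, so that the residual on the left cancels and one obtains
\begin{equation*}
    \alpha\,d_\calG^{p^\dag}(u_\alpha^\delta,u^\dag)\le \tfrac12\delta^2+\tfrac12\alpha^2\norm{w}_Y^2+\alpha\norm{w}_Y\delta.
\end{equation*}
Dividing by $\alpha$ and plugging in the \emph{a priori} choice $\alpha=c\delta$ from \eqref{eq:apriori} makes each of the three summands on the right a constant multiple of $\delta$, yielding the claimed bound with $C=\tfrac{1}{2c}+\tfrac{c}{2}\norm{w}_Y^2+\norm{w}_Y$.

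I do not expect a real obstacle: every step uses only convexity of $\calG$, the definition of the Bregman distance, the source condition, and Young's inequality. The only point to be careful about is the sign/Young's balancing so that the residual $\tfrac12\norm{Ku_\alpha^\delta-y^\delta}_Y^2$ is fully absorbed on the left, and making sure the source condition is used in the correct direction ($K^*$ acting on the primal variable produces a dual pairing that can be rewritten as an inner product with $w$ in $Y$). Since this is exactly the setting of, e.g., \cite[Cor.~3.4]{ItoJin}, I would most likely just cite that result after verifying that its hypotheses are satisfied.
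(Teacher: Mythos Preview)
Your proposal is correct and matches the paper's treatment exactly: the paper does not give its own proof but simply cites \cite[Cor.~3.4]{ItoJin}, which is precisely the standard Bregman-distance argument you sketched. Your final remark that one could just cite that result is in fact what the paper does.
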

We obtain the same rate under the classical Morozov discrepancy principle
\begin{equation}\label{eq:morozov}
    \delta<\norm{Ku_\alpha^\delta-y^\delta}_Y\le \tau\delta,
\end{equation}
for some $\tau>1$ from, e.g., \cite[Thm.~3.15]{ItoJin}.
\begin{proposition}[Convergence rate, \emph{a posteriori}]
    \label{thm:conv_aposteriori}
    Assume that the source condition \eqref{eq:source} holds and  that $\alpha=\alpha(\delta)$ is chosen according to \eqref{eq:morozov}. Then there exists a $C>0$ such that 
    \begin{equation}
        d_\calG^{p^\dag}(u_{\alpha}^\delta,u^\dag) \leq C\delta.
    \end{equation}
\end{proposition}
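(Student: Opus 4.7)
The plan is to follow the standard template for convex Tikhonov a posteriori rates and simply verify that each ingredient is available in our setting. Since the Bregman distance is already well-defined via \eqref{eq:bregman}, and the source condition $p^\dag = K^*w$ from \eqref{eq:source} is assumed, the proof reduces to combining the minimizing property of $u_\alpha^\delta$ with the Morozov bound \eqref{eq:morozov}.

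First, I would test the minimization inequality for $u_\alpha^\delta$ against $u^\dag$:
\begin{equation*}
    \tfrac12\norm{Ku_\alpha^\delta-y^\delta}_Y^2 + \alpha\calG(u_\alpha^\delta) \leq \tfrac12\norm{Ku^\dag-y^\delta}_Y^2 + \alpha\calG(u^\dag).
\end{equation*}
Since $Ku^\dag = y^\dag$ and $\norm{y^\dag-y^\delta}_Y\leq\delta$, the right-hand side is at most $\tfrac12\delta^2 + \alpha\calG(u^\dag)$. The lower bound $\norm{Ku_\alpha^\delta-y^\delta}_Y > \delta$ from \eqref{eq:morozov} then cancels the data term and yields
\begin{equation*}
    \calG(u_\alpha^\delta) - \calG(u^\dag) \leq 0.
\end{equation*}

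Next, I would unfold the Bregman distance \eqref{eq:bregman} at $p^\dag$ and substitute the source condition:
\begin{equation*}
    d_\calG^{p^\dag}(u_\alpha^\delta,u^\dag) = \calG(u_\alpha^\delta) - \calG(u^\dag) - \dual{K^*w, u_\alpha^\delta - u^\dag}_X.
\end{equation*}
Rewriting the duality pairing as $\dual{w, Ku_\alpha^\delta - y^\dag}_Y$ and applying the bound from the previous step together with Cauchy--Schwarz gives
\begin{equation*}
    d_\calG^{p^\dag}(u_\alpha^\delta,u^\dag) \leq \norm{w}_Y \, \norm{Ku_\alpha^\delta - y^\dag}_Y.
\end{equation*}

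Finally, I would estimate $\norm{Ku_\alpha^\delta - y^\dag}_Y$ by the triangle inequality, using the upper Morozov bound $\norm{Ku_\alpha^\delta-y^\delta}_Y \leq \tau\delta$ and the noise bound $\norm{y^\delta-y^\dag}_Y\leq\delta$, obtaining $\norm{Ku_\alpha^\delta - y^\dag}_Y \leq (\tau+1)\delta$ and hence the claim with $C = (\tau+1)\norm{w}_Y$. There is no real obstacle here; the only point that requires care is the correct direction of the Morozov inequality (the strict lower bound $\delta < \norm{Ku_\alpha^\delta-y^\delta}_Y$), which is precisely what forces the discrepancy term to drop out in favor of the $\calG$-inequality. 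The argument does not use any structural property of the multi-bang penalty beyond convexity and lower semicontinuity, so it is genuinely just the specialization of the general convex regularization rate to our functional.
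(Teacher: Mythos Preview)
Your argument is correct and is precisely the standard proof of the a posteriori rate for convex Tikhonov regularization. The paper does not actually supply its own proof of this proposition; it simply invokes \cite[Thm.~3.15]{ItoJin}, and what you have written is essentially the argument behind that reference specialized to the present setting.
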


\section{Pointwise convergence}\label{sec:pointwise}

The pointwise definition \eqref{eq:bregman_pointwise} of the Bregman distance together with the explicit pointwise characterization \eqref{eq:multibang:dg} of subgradients allows us to show that the convergence in \cref{thm:convergence} is actually pointwise if $u^\dag(x)\in \{u_1,\dots,u_d\}$ almost everywhere. 
The following lemma provides the central argument for pointwise convergence.
\begin{lemma}\label{lem:conv_pointwise}
    Let $v^\dag\in\{u_1,\dots,u_d\}$ and $q^\dag\in\partial g(v^\dag)$ satisfying
    \begin{equation}\label{eq:ass_p}
        q^\dag\in
        \begin{cases}
            \left\{\tfrac{1}{2}(u_i+u_{i+1})\right\} &\text{if } v^\dag\in(u_i,u_{i+1}),\quad 1\le i<d,\\
            \left(\tfrac{1}{2}(u_i+u_{i-1}),\tfrac{1}{2}(u_i+u_{i+1})\right), &\text{if }v^\dag=u_i,\qquad\qquad\! 1<i<d\\
            \left(-\infty,\tfrac{1}{2}(u_1+u_2)\right), &\text{if }v^\dag=u_1\\
            \left(\tfrac{1}{2}(u_d+u_{d-1}),\infty\right), &\text{if }v^\dag=u_d
        \end{cases}
    \end{equation}

    Furthermore, let $\{v_n\}_{n\in\N}\subset [u_1,u_d]$ be a sequence with 
    \begin{equation}\label{eq:conv_pointwise}
        d_g^{q^\dag}(v_n,v^\dag)\rightarrow 0.
    \end{equation}
    Then, $v_n \to v^\dag$.
\end{lemma}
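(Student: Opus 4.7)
The plan is to exploit the fact that $g$ is piecewise affine on $[u_1,u_d]$ to establish a linear lower bound of the form
\begin{equation*}
d_g^{q^\dag}(v,v^\dag) \;\geq\; c\,|v-v^\dag| \qquad \text{for all } v\in[u_1,u_d],
\end{equation*}
with a constant $c=c(v^\dag,q^\dag)>0$. Once this inequality is in hand, the conclusion $v_n\to v^\dag$ is immediate from the hypothesis $d_g^{q^\dag}(v_n,v^\dag)\to 0$ by a direct division.

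To establish the linear lower bound, I would observe that on each subinterval $[u_i,u_{i+1}]$ the function $g$ is affine with slope $m_i:=\tfrac12(u_i+u_{i+1})$, and these slopes are strictly increasing in $i$. Consequently the Bregman function $h(v):=d_g^{q^\dag}(v,v^\dag)=g(v)-g(v^\dag)-q^\dag(v-v^\dag)$ is piecewise affine with the same breakpoints and slopes $m_i-q^\dag$, and is nonnegative with $h(v^\dag)=0$ by the subgradient inequality. The crucial input from hypothesis \eqref{eq:ass_p} is that $q^\dag$ lies in the \emph{relative interior} of $\partial g(v^\dag)$, so the one-sided slopes of $h$ at $v^\dag$ are strictly nonzero.

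The case distinction is then a short check. For $v^\dag=u_i$ with $1<i<d$, the slope of $h$ on $[u_i,u_{i+1}]$ equals $m_i-q^\dag>0$ and all subsequent slopes are even larger; symmetrically, the slope on $[u_{i-1},u_i]$ equals $m_{i-1}-q^\dag<0$ and earlier slopes are only more negative. Integrating these slopes from $v^\dag$ outward yields
\begin{equation*}
h(v)\;\geq\; \min\bigl(m_i-q^\dag,\;q^\dag-m_{i-1}\bigr)\,|v-v^\dag|,
\end{equation*}
and $c:=\min(m_i-q^\dag,\,q^\dag-m_{i-1})>0$ by \eqref{eq:ass_p}. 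The boundary cases $v^\dag=u_1$ and $v^\dag=u_d$ are analogous but one-sided, giving $c=m_1-q^\dag>0$ and $c=q^\dag-m_{d-1}>0$, respectively. The first case listed in \eqref{eq:ass_p} is vacuous under the standing assumption $v^\dag\in\{u_1,\dots,u_d\}$ and needs no attention.

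No step presents a real obstacle; the argument is essentially a picture of a convex piecewise-affine function with a nondegenerate minimum at $v^\dag$. The only point requiring care is to check that it is precisely the \emph{strictness} of the inclusions in \eqref{eq:ass_p} that prevents $h$ from being flat to one side of $v^\dag$ — which is exactly what would allow a sequence $v_n\not\to v^\dag$ with $h(v_n)\to 0$ — and hence that the hypothesis cannot be weakened to allow $q^\dag$ at the endpoints of $\partial g(v^\dag)$.
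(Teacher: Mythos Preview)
Your argument is correct and in fact somewhat cleaner than the paper's. The paper proves the lemma by contraposition: assuming $v_n\not\to v^\dag=u_i$, it extracts a subsequence with $|v_n-v^\dag|>\eps$ and then bounds $d_g^{q^\dag}(v_n,v^\dag)$ from below by a case distinction according to whether $v_n\in(u_i,u_{i+1}]$, $v_n>u_{i+1}$, or $v_n<u_i$ (and symmetrically). The crossing-over case $v_n>u_{i+1}$ is handled via the three-point identity for Bregman distances, splitting through $u_{i+1}$. You bypass this machinery by exploiting the piecewise-affine structure more directly: since the slopes $m_j-q^\dag$ of $h$ are monotone in $j$, the one-sided slope at $v^\dag$ dominates globally, giving the uniform linear bound $h(v)\geq c\,|v-v^\dag|$ in one stroke. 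Your version also makes a quantitative estimate explicit (the Bregman distance controls $|v-v^\dag|$ linearly), which the paper's contraposition does not state; this is morally the same content, but it clarifies why pointwise convergence \emph{rates} are not immediate---your constant $c$ depends on $q^\dag=p^\dag(x)$ and hence on $x$, consistent with the paper's remark after \cref{thm:convergence_pointwise}.
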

\begin{proof}
    We argue by contraposition: Assume that $v_n$ does not converge to $v^\dag=u_i$ for some $1\leq i \leq d$. Then there exists an $\eps>0$ such that for every $n_0\in\N$, there is an $n\ge n_0$ with $|v_n-v^\dag|>\eps$, i.e., either $v_n>u_i+\eps$ or $v_n<u_i-\eps$. 
    We now further discriminate these two cases. (Note that some cases cannot occur if $i=1$ or $i=d$.)
    \begin{enumerate}[label=(\roman*)]
        \item $v_n>u_{i+1}$: Then, $v_n\in(u_k,u_{k+1}]$ for some $k\ge i+1$. The three point identity \eqref{eq:threepoint} yields that
            \begin{equation} 
                d_g^{q^\dag}(v_n,v^\dag)=d_g^{q_{i+1}}(v_n,u_{i+1})+d_g^{q^\dag}(u_{i+1},v^\dag)+(q_{i+1}-q^\dag)(v_n-u_{i+1})
            \end{equation}
            for $q_{i+1}\in\partial g(u_{i+1})$. We now estimate each term separately. The first term is nonnegative by the properties of Bregman distances. For the last term, we can use the assumption \eqref{eq:ass_p} and the pointwise characterization \eqref{eq:multibang:dg} to obtain
            \begin{equation}
                q^\dag\in\left(\tfrac{1}{2}(u_i+u_{i-1}),\tfrac{1}{2}(u_i+u_{i+1})\right)\quad\text{and}\quad q_{i+1}\in\left[\tfrac{1}{2}(u_{i+1}+u_i),\tfrac{1}{2}(u_{i+1}+u_{i+2})\right],
            \end{equation}
            which implies that $q_{i+1}-q^\dag>0$. By assumption we have $v_n-u_{i+1}>0$, which together implies that the last term is strictly positive. For the second term, we can use that $v^\dag,u_{i+1}\in[u_i,u_{i+1}]$ to simplify the Bregman distance to
            \begin{equation}
                d_g^{q^\dag}(u_{i+1},v^\dag)=\frac{1}{2}(u_{i+1}-u_i)(u_{i+1}+u_i-2q^\dag) >0,
            \end{equation}
            again by assumption \eqref{eq:ass_p}. Since this term is independent of $n$, we obtain the estimate
            \begin{equation}
                d_g^{q^\dag}(v_n,v^\dag)>d_g^{q^\dag}(u_{i+1},v^\dag)=:\eps_1>0.
            \end{equation} 
        \item $u_i<v_n\le u_{i+1}$: In this case, we can again simplify 
            \begin{equation}
                d_g^{q^\dag}(v_n,v^\dag)=\frac{1}{2}(u_{i+1}+u_i-2q^\dag)(v_n-v^\dag)>C_1\eps,
            \end{equation}
            since $C_1:=\frac{1}{2}(u_{i+1}+u_i-2q^\dag)>0$ by assumption \eqref{eq:ass_p} and $v_n-v^\dag>\eps$ by hypothesis. 
        \item $v_n<u_i$: We argue similarly to either obtain
            \begin{align}
                d_g^{q^\dag}(v_n,v^\dag)&>d_g^{q^\dag}(u_{i-1},v^\dag)=:\eps_2>0
                \intertext{or}
                d_g^{q^\dag}(v_n,v^\dag)&>C_2\eps                
            \end{align}
            for $C_2:=-\frac{1}{2}(u_{i-1}+u_i-2q^\dag)>0$.
    \end{enumerate}
    Thus if we set $\tilde \eps:=\min\{\eps_1,\eps_2,C_1\eps,C_2\eps\}$, for every $n_0\in\N$ we can find $n\geq n_0$ such that $d_g^{q^\dag}(v_n,v^\dag)>\tilde \eps>0$. Hence, $d_g^{q^\dag}(v_n,v^\dag)$ cannot converge to $0$.
\end{proof}

Assumption \eqref{eq:ass_p} can be interpreted as a strict complementarity condition for $q^\dag$ and $v^\dag$. Comparing \eqref{eq:ass_p} to \eqref{eq:multibang:dg}, we point out that such a choice of $q^\dag$ is always possible.
If $v^\dag\notin\{u_1,\dots,u_d\}$, on the other hand, convergence in Bregman distance is uninformative.
\begin{lemma}
    Let $v^\dag\in(u_i,u_{i+1})$ for some $1\leq i<d$ and $q^\dag\in \partial g(v^\dag)$. Then we have
    \begin{equation}\label{eq91}
        d_\calG^{q^\dag}(v,v^\dag)=0\qquad\text{for any}\quad v\in[u_i,u_{i+1}].
    \end{equation} 
\end{lemma}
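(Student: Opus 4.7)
The plan is to observe that on the interval $[u_i,u_{i+1}]$ the integrand $g$ coincides with the single affine piece $g_i$, and that the only element of $\partial g(v^\dag)$ for $v^\dag$ in the \emph{interior} of this interval is exactly the slope of $g_i$. The Bregman distance of an affine function to itself, measured against its own slope, is identically zero, so the claim will follow by direct computation.

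More concretely, I would proceed as follows. First, invoke the pointwise characterization \eqref{eq:multibang:dg}: since $v^\dag\in(u_i,u_{i+1})$, we have $\partial g(v^\dag)=\{\tfrac12(u_i+u_{i+1})\}$, hence $q^\dag=\tfrac12(u_i+u_{i+1})$. Next, use the definition of $g$ on $[u_i,u_{i+1}]$: for every $v\in[u_i,u_{i+1}]$,
\begin{equation}
    g(v)=g_i(v)=\tfrac12\left((u_i+u_{i+1})v-u_iu_{i+1}\right),
\end{equation}
and in particular the same expression holds with $v$ replaced by $v^\dag$. Plugging these into the definition of the Bregman distance yields
\begin{equation}
    d_g^{q^\dag}(v,v^\dag)=g_i(v)-g_i(v^\dag)-\tfrac12(u_i+u_{i+1})(v-v^\dag)=0,
\end{equation}
which is the claim.

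There is essentially no obstacle here: the only subtle point is to recognize that $\partial g(v^\dag)$ is a singleton in the interior of a linear piece, so that $q^\dag$ is forced to equal the slope of $g_i$, and that $g$ agrees with $g_i$ on the full closed interval $[u_i,u_{i+1}]$ (not just on its interior). Both facts are immediate from \eqref{eq:multibang:dg} and the definition of $g$, respectively.
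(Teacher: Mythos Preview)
Your proposal is correct and follows essentially the same approach as the paper: identify $q^\dag=\tfrac12(u_i+u_{i+1})$ via \eqref{eq:multibang:dg}, use that $g$ coincides with the affine piece $g_i$ on the whole interval $[u_i,u_{i+1}]$, and then compute the Bregman distance directly to obtain zero. The paper's proof is the same one-line computation, written out with the explicit formula for $g_i$ rather than the shorthand.
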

\begin{proof}
    By the definition of the Bregman distance and the characterization \eqref{eq:multibang:dg} of $\partial g(v^\dag)$ (which is single-valued under the assumption on $v^\dag$), we directly obtain
    \begin{equation}
        \begin{multlined}
            d_g^{q^\dag}(v,v^\dag)=\frac{1}{2}\left[(u_i+u_{i+1})v-u_iu_{i+1}\right]-\frac{1}{2}[(u_i+u_{i+1})v^\dag-u_iu_{i+1}]\\
            -\frac{1}{2}(u_i+u_{i+1})(v-v^\dag)=0
        \end{multlined}
    \end{equation}
    for any $v\in [u_i,u_{i+1}]$.
\end{proof}

\Cref{lem:conv_pointwise} allows us to translate the weak convergence from \cref{thm:convergence} to pointwise convergence, which is the main result of our work.
\begin{theorem}\label{thm:convergence_pointwise}
    Assume the conditions of \cref{thm:convergence} hold. If $u^\dag(x)\in\{u_1,\dots,u_d\}$ almost everywhere, the subsequence $u_{\alpha_n}^{\delta_n}\to u^\dag$ pointwise almost everywhere.
\end{theorem}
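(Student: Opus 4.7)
The plan is to use the Bregman distance with a carefully chosen subgradient at $u^\dag$ satisfying strict complementarity, together with the pointwise characterization \eqref{eq:bregman_pointwise} and \cref{lem:conv_pointwise}, to upgrade the weak convergence from \cref{thm:convergence} to pointwise almost-everywhere convergence. First I would pick a bounded measurable selection $p^\dag:\Omega\to\R$ with $p^\dag(x)\in\partial g(u^\dag(x))$ satisfying the strict-complementarity assumption \eqref{eq:ass_p} pointwise. Since $u^\dag$ takes only the finitely many values $u_1,\dots,u_d$, this can be done by assigning a fixed value in the appropriate open interval from \eqref{eq:multibang:dg} on each level set $\{u^\dag=u_i\}$; the resulting simple function lies in $L^\infty(\Omega)\subset L^2(\Omega)$, and it belongs to $\partial\calG(u^\dag)$ by the pointwise characterization \eqref{eq:multibang:dG}.

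Next I would establish $\calG(u_{\alpha_n}^{\delta_n})\to\calG(u^\dag)$. Comparing the minimizer $u_{\alpha_n}^{\delta_n}$ with $u^\dag$ in \eqref{eq:tikhonov} and using $\norm{Ku^\dag-y^{\delta_n}}_Y\le\delta_n$ yields
\begin{equation*}
\calG(u_{\alpha_n}^{\delta_n})\le\frac{\delta_n^2}{2\alpha_n}+\calG(u^\dag),
\end{equation*}
so that the parameter rule \eqref{eq:ass_alpha} gives $\limsup_n\calG(u_{\alpha_n}^{\delta_n})\le\calG(u^\dag)$, while weak lower semicontinuity of $\calG$ along the subsequence produced by \cref{thm:convergence} supplies the matching $\liminf$. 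Combined with the weak convergence $u_{\alpha_n}^{\delta_n}\wkto u^\dag$ in $L^2(\Omega)$ and $p^\dag\in L^2(\Omega)$, I can then pass to the limit in each summand of
\begin{equation*}
d_\calG^{p^\dag}(u_{\alpha_n}^{\delta_n},u^\dag)=\calG(u_{\alpha_n}^{\delta_n})-\calG(u^\dag)-\dual{p^\dag,u_{\alpha_n}^{\delta_n}-u^\dag}_{L^2(\Omega)}
\end{equation*}
to conclude $d_\calG^{p^\dag}(u_{\alpha_n}^{\delta_n},u^\dag)\to 0$.

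The final step is to localize this to pointwise convergence. By \eqref{eq:bregman_pointwise}, the nonnegative integrand $x\mapsto d_g^{p^\dag(x)}(u_{\alpha_n}^{\delta_n}(x),u^\dag(x))$ converges to $0$ in $L^1(\Omega)$, so after extracting a further (not relabelled) subsequence it tends to $0$ pointwise almost everywhere. On any such full-measure set, $u_{\alpha_n}^{\delta_n}(x)\in[u_1,u_d]=\dom g$, and \cref{lem:conv_pointwise} applied with $v_n=u_{\alpha_n}^{\delta_n}(x)$, $v^\dag=u^\dag(x)$, and $q^\dag=p^\dag(x)$ delivers $u_{\alpha_n}^{\delta_n}(x)\to u^\dag(x)$.

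The only nontrivial ingredient I expect is the construction of the strictly complementary selector $p^\dag$; this is clean here because $u^\dag$ has finite range, and it is precisely this finiteness that converts Bregman convergence into pointwise convergence via \cref{lem:conv_pointwise}. The remainder is essentially bookkeeping, combining the standard convex-Tikhonov estimate for $\calG(u_{\alpha_n}^{\delta_n})$ with the pointwise subdifferential formula \eqref{eq:multibang:dg} that drives \cref{lem:conv_pointwise}.
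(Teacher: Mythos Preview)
Your proposal is correct and follows essentially the same route as the paper: establish $\calG(u_{\alpha_n}^{\delta_n})\to\calG(u^\dag)$ from the minimizing property and weak lower semicontinuity, deduce $d_\calG^{p^\dag}(u_{\alpha_n}^{\delta_n},u^\dag)\to 0$, then use the pointwise representation \eqref{eq:bregman_pointwise} and \cref{lem:conv_pointwise} with a strictly complementary selector $p^\dag$. Your version is in fact slightly more careful in two places---the explicit construction of $p^\dag$ as a simple function on the level sets $\{u^\dag=u_i\}$, and the extraction of a further subsequence to pass from $L^1$ convergence of the nonnegative integrand to pointwise a.e.\ convergence---both of which the paper handles tacitly.
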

\begin{proof} 
    From \cref{thm:convergence}, we obtain a subsequence $\{u_n\}_{n\in\N}$ of $\{u_{\alpha_n}^{\delta_n}\}_{n\in\N}$ converging weakly to $u^\dag$. Since $\calG$ is convex and lower semicontinuous, we 
    have that
    \begin{equation}\label{eq:convergence_pointwise1}
        \calG(u^\dag)\leq \liminf_{n\to\infty}\calG(u_n)\leq \lim_{n\to\infty}\calG(u_n).
    \end{equation}
    By the minimizing properties of $\{u_n\}_{n\in\N}$ and the nonnegativity of the discrepancy term, we further obtain that
    \begin{equation}
        \alpha_n\calG(u_n) \leq \frac{1}{2}\norm{Ku_n-y^{\delta_n}}_{Y}^2+\alpha_n\calG(u_n)\leq \frac{\delta_n^2}{2}+\alpha_n\calG(u^\dag).
    \end{equation}
    Dividing this inequality by $\alpha_n$ and passing to the limit $n\to \infty$, the assumption on $\alpha_n$ from \cref{thm:convergence} yields that
    \begin{equation}
        \lim_{n\to\infty}\calG(u_n)\leq \calG(u^\dag),
    \end{equation}
    which combined with \eqref{eq:convergence_pointwise1} gives $\lim_{n\to\infty}\calG(u_n)=\calG(u^\dag)$.  
    Hence, $u_n\wkto u^\dag$ implies that
    $d_{\calG}^{p^\dag}(u_n,u^\dag)\to 0$ for any $p^\dag\in\partial\calG(u^\dag)$. By the pointwise characterization \eqref{eq:bregman_pointwise} and the nonnegativity of Bregman distances, this implies that $d_g^{p^\dag(x)}(u_n(x),u^\dag(x))\to 0$ for almost every $x\in\Omega$.
    Choosing now $p^\dag\in\partial \calG(u^\dag)$ such that \eqref{eq:ass_p} holds for $q^\dag = p^\dag(x)$ and $v^\dag=u^\dag(x)$ almost everywhere, the claim follows from \cref{lem:conv_pointwise}.
\end{proof}
Since $u_n(x)\in[u_1,u_d]$ by construction, the subsequence $\{u_n\}_{n\in\N}$ is bounded in $L^\infty(\Omega)$ and hence also converges strongly in $L^p(\Omega)$ for any $1\leq p<\infty$ by Lebesgue's dominated convergence theorem. We remark that since \cref{lem:conv_pointwise} applied to $u_n(x)$ and $u^\dag(x)$ does not hold uniformly in $\Omega$, we cannot expect that the convergence rates from \cref{thm:conv_apriori,thm:conv_aposteriori} hold pointwise or strongly as well. 

\section{Structure of minimizers}\label{sec:structure}

We now briefly discuss the structure of reconstructions obtained by minimizing the Tikhonov functional in \eqref{eq:tikhonov} for given $y^\delta\in Y$ and fixed $\alpha>0$, based on the necessary optimality conditions for \eqref{eq:tikhonov}. Since the discrepancy term is convex and differentiable, we can apply the sum rule for convex subdifferentials. Furthermore, the standard calculus for Fenchel conjugates and subdifferentials (see, e.g., \cite{Schirotzek:2007}) yields for $\calG_\alpha:=\alpha\calG$ that $\calG_\alpha^*(p)=\alpha\calG^*(\alpha^{-1}p)$ and hence that $p\in \partial\calG_\alpha(u)$ if and only if $u\in \partial\calG_\alpha^*(p)=\partial \calG^*(\tfrac1\alpha p)$. We thus obtain as in \cite{CK:2013} that $\bar u:=u_\alpha^\delta\in L^2(\Omega)$ is a solution to \eqref{eq:tikhonov} if and only if there exists a $\bar p\in L^2(\Omega)$ satisfying
\begin{equation}\label{eq:optimality}
    \left\{\begin{aligned}
            \bar p & = K^{*}(y^\delta - K\bar u)\\
            \bar u &\in \partial\calG_\alpha^*(\bar p) := \begin{cases}
                \{u_i\} &  \bar p(x) \in Q_i,\qquad 1\leq i \leq d,\\
                [u_i,u_{i+1}] &\bar p(x) \in  Q_{i,i+1}\quad\ 1\leq i< d.  
            \end{cases}
    \end{aligned}\right.
\end{equation}
for
\begin{align}
    Q_1 &=\set{q}{q <\tfrac\alpha2 (u_1+u_2)},\\
    Q_i &=\set{q}{\tfrac\alpha2(u_{i-1}+u_i) < q < \tfrac\alpha2(u_{i}+u_{i+1})},\quad  1<i<d,\\
    Q_d &=\set{q}{q >\tfrac\alpha2 (u_{d-1}+u_d)},\\
    Q_{i,i+i}&= \set{q}{q = \tfrac\alpha2(u_{i}+u_{i+1})}, \qquad\qquad\qquad\quad 1\leq i < d.
\end{align}
Here we have made use of the pointwise characterization in \eqref{eq:multibang:dgstar} and reformulated the case distinction in terms of $\bar p(x)$ instead of $\frac1\alpha \bar p(x)$.

First, we obtain directly from \eqref{eq:optimality} the desired structure of the reconstruction $\bar u$: Apart from a singular set 
\begin{equation}
    \mathcal{S}:=\set{x\in \Omega}{\bar p(x) = \tfrac\alpha2(u_i+u_{i+1})\text{ for some }1\leq i <d},
\end{equation}
we always have $\bar u(x) \in \{u_1,\dots,u_d\}$. For operators $K$ where $K^*w$ cannot be constant on a set of positive measure unless $w=0$ locally (as is the case for many operators involving solutions to partial differential equations; see \cite[Prop.~2.3]{CK:2013}) and $y^\delta \notin\ran K$, the singular set $\mathcal{S}$ has zero measure  and hence the ``multi-bang'' structure $\bar u\in\{u_1,\dots,u_d\}$ almost everywhere can be guaranteed \emph{a priori} for any $\alpha>0$.

Furthermore, we point out that the regularization parameter $\alpha$ only enters via the case distinction. In particular, increasing $\alpha$ shifts the conditions on $\bar u(x)$ such that the smaller values among the $u_i$ become more preferred. In fact, if $\bar p$ is bounded, we can expect that there exists an $\alpha_0>0$ such that $\bar u\equiv u_1$ for all $\alpha>\alpha_0$. Conversely, for $\alpha\to 0$, the second line of \eqref{eq:optimality} reduces to
\begin{equation}
    \bar u(x) \in
    \begin{cases} 
        \{u_1\} &\text{if } \bar p(x)<0,\\
        \{u_d\} &\text{if } \bar p(x)>0,\\
        [u_1,u_d] &\text{if }\bar p(x)=0,
    \end{cases}
\end{equation}
i.e., \eqref{eq:optimality} coincides with the well-known optimality conditions for bang-bang control problems; see, e.g., \cite[Lem.~2.26]{Troeltzsch}.
Since in the context of inverse problems, we only have $\alpha=\alpha(\delta)\to 0$ if $\delta \to 0$, the limit system \eqref{eq:optimality} will contain consistent data and hence $\bar p\equiv 0$. This allows recovery of $u^\dag(x)\in \{u_2,\dots,u_{d-1}\}$ on a set of positive measure, consistent with \cref{thm:convergence}. However, if $u^\dag(x) \in \{u_1,\dots,u_d\}$ does not hold almost everywhere, we can only expect weak and not strong convergence, cf.~\cite[Prop.~5.10\,(ii)]{CTW:2016}.

\section{Numerical solution}\label{sec:semismooth}

In this section we address the numerical solution of the Tikhonov minimization problem \eqref{eq:tikhonov} for given $y^\delta \in Y$ and $\alpha>0$, following \cite{CK:2015}. For the sake of presentation, we omit the dependence on $\alpha$ and $\delta$ from here on.
We start from the necessary (and, due to convexity, sufficient) optimality conditions \eqref{eq:optimality}. To apply a semismooth Newton method, we replace  the subdifferential inclusion $\bar u\in \partial \calG_\alpha^*(\bar p)$ by its single-valued Moreau--Yosida regularization, i.e., we consider for $\gamma>0$ the regularized optimality conditions 
\begin{equation}\label{eq:opt_reg}
    \left\{\begin{aligned}
            p_\gamma & = K^{*}(y^\delta - Ku_\gamma)\\
            u_\gamma & = (\partial\calG_\alpha^*)_\gamma(p_\gamma).
    \end{aligned}\right.
\end{equation}
The Moreau--Yosida regularization can also be expressed as 
\begin{equation}
    H_\gamma:=(\partial\calG_\alpha^*)_\gamma = \partial(\calG_{\alpha,\gamma})^*
\end{equation}
for 
\begin{equation}
    \calG_{\alpha,\gamma}(u) := \alpha\calG(u) + \frac\gamma2\norm{u}_{L^2(\Omega)}^2,
\end{equation}
see, e.g., \cite[Props.~13.21, 12.29]{Bauschke:2011}. This implies that  for $(u_\gamma,p_\gamma)$ satisfying \eqref{eq:opt_reg}, $u_\gamma$ is a solution to the strictly convex problem
\begin{equation}
    \min_{u\in L^2(\Omega)} \frac12\norm{Ku-y^\delta}_Y^2 + \alpha\calG(u) + \frac\gamma2\norm{u}_{L^2(\Omega)}^2,
\end{equation}
so that existence of a solution can be shown by the same arguments as for \eqref{eq:tikhonov}. Note that by regularizing the conjugate subdifferential, we have not smoothed the nondifferentiability but merely made the functional (more) strongly convex. The regularization of $\calG_\alpha^*$ instead of $\calG^*$ also ensures that the regularization is robust for $\alpha\to0$. From \cite[Prop.~4.1]{CK:2015}, we obtain the following convergence result.
\begin{proposition}
    The family $\{u_{\gamma}\}_{\gamma>0}$ satisfying \eqref{eq:opt_reg} contains at least one subsequence $\{u_{\gamma_n}\}_{n\in\N}$ converging to a global minimizer of \eqref{eq:tikhonov} as $n\to\infty$. Furthermore, for any such subsequence, the convergence is strong.
\end{proposition}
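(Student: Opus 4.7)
The plan is to first exploit the boundedness of the effective domain $\co U$ to extract a weakly convergent subsequence, then identify the weak limit as a minimizer of \eqref{eq:tikhonov} via a standard lower-semicontinuity argument, and finally sharpen weak to strong convergence using the Hilbert space structure.

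More concretely, I would first observe that every $u_\gamma$ satisfies $u_\gamma(x)\in[u_1,u_d]$ almost everywhere (since otherwise $\calG(u_\gamma)=\infty$ would contradict optimality against any feasible comparison element), so $\{u_\gamma\}_{\gamma>0}$ is uniformly bounded in $L^\infty(\Omega)$ and hence in $L^2(\Omega)$. By reflexivity I extract a subsequence $u_{\gamma_n}\wkto \bar u$ in $L^2(\Omega)$ for some $\bar u\in\co U$. To identify $\bar u$, I use the characterization of $u_{\gamma_n}$ as the global minimizer of the strongly convex functional $\frac12\norm{K\cdot - y^\delta}_Y^2+\alpha\calG+\frac{\gamma_n}{2}\norm{\cdot}_{L^2(\Omega)}^2$. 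Testing this with an arbitrary $u^*\in L^2(\Omega)$ yields
\begin{equation}\label{eq:ssn_min}
    \tfrac12\norm{Ku_{\gamma_n}-y^\delta}_Y^2 + \alpha\calG(u_{\gamma_n}) + \tfrac{\gamma_n}{2}\norm{u_{\gamma_n}}_{L^2(\Omega)}^2 \leq \tfrac12\norm{Ku^*-y^\delta}_Y^2 + \alpha\calG(u^*) + \tfrac{\gamma_n}{2}\norm{u^*}_{L^2(\Omega)}^2.
\end{equation}
Passing to the $\liminf$, the quadratic penalty terms vanish (using $\gamma_n\to0$ and the uniform $L^2$-bound on $u_{\gamma_n}$), weak closedness of $K$ combined with weak lower semicontinuity of $\norm{\cdot}_Y^2$ handles the discrepancy, and weak lower semicontinuity of $\calG$ handles the penalty. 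This gives $\frac12\norm{K\bar u-y^\delta}_Y^2+\alpha\calG(\bar u)\leq\frac12\norm{Ku^*-y^\delta}_Y^2+\alpha\calG(u^*)$ for every $u^*$, so $\bar u$ is a minimizer of \eqref{eq:tikhonov}.

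For the strong convergence, the key trick is to apply \eqref{eq:ssn_min} with the specific choice $u^* = \bar u$. Since $\bar u$ is itself a minimizer of the unregularized functional, $\frac12\norm{K\bar u-y^\delta}_Y^2+\alpha\calG(\bar u)\leq \frac12\norm{Ku_{\gamma_n}-y^\delta}_Y^2+\alpha\calG(u_{\gamma_n})$, so \eqref{eq:ssn_min} collapses to $\frac{\gamma_n}{2}\norm{u_{\gamma_n}}_{L^2(\Omega)}^2\leq \frac{\gamma_n}{2}\norm{\bar u}_{L^2(\Omega)}^2$, i.e. $\limsup_n\norm{u_{\gamma_n}}_{L^2(\Omega)}\leq\norm{\bar u}_{L^2(\Omega)}$. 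Combined with the weak lower semicontinuity $\norm{\bar u}_{L^2(\Omega)}\leq\liminf_n\norm{u_{\gamma_n}}_{L^2(\Omega)}$, this yields norm convergence, and in the Hilbert space $L^2(\Omega)$ weak convergence together with convergence of norms implies strong convergence.

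The only delicate point I anticipate is the strong-convergence step: one has to be careful that the cancellation in \eqref{eq:ssn_min} really works when $u^*=\bar u$, which in turn relies crucially on $\bar u$ having already been identified as a minimizer in the previous step. Once that is established, everything reduces to the Radon--Riesz property of Hilbert spaces, and no additional structural information about $\calG$ is required.
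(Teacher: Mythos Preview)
Your argument is correct. The paper itself does not supply a proof; it merely invokes \cite[Prop.~4.1]{CK:2015}. What you have written is precisely the standard argument underlying that cited result: uniform $L^\infty$-boundedness from the box constraint $\co U$, weak subsequential compactness, identification of the limit via weak lower semicontinuity of the Tikhonov functional (dropping the vanishing $\tfrac{\gamma_n}{2}\norm{\cdot}^2$ term), and then upgrading to strong convergence by the Radon--Riesz trick of testing the $\gamma_n$-problem against the limit $\bar u$ to obtain $\norm{u_{\gamma_n}}_{L^2}\leq\norm{\bar u}_{L^2}$.

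One small remark on presentation: when you say ``weak closedness of $K$ combined with weak lower semicontinuity of $\norm{\cdot}_Y^2$ handles the discrepancy,'' what you actually use is weak-to-weak continuity of $K$ (so that $Ku_{\gamma_n}\rightharpoonup K\bar u$ in $Y$), which holds here because $K$ is linear and bounded. The weak closedness hypothesis in the paper is sufficient for this, since a linear operator defined on all of $L^2(\Omega)$ with (weakly) closed graph is bounded by the closed graph theorem; but it is worth making explicit that this is the mechanism, as ``weak closedness'' alone does not directly give $Ku_{\gamma_n}\rightharpoonup K\bar u$ without first knowing the image sequence converges weakly somewhere. Apart from this phrasing detail, the proof is complete.
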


From \cite[Appendix~\textsc{a}.2]{CIK:2014} we further obtain the pointwise characterization 
\begin{equation}\label{eq:hgamma}
    [H_\gamma(p)](x) =
    \begin{cases}
        u_i & \text{if }p(x)\in Q^\gamma_i,\qquad 1\leq i\leq d,\\
        \tfrac1\gamma(p(x)-\tfrac\alpha2(u_i+u_{i+1})) & \text{if }p(x) \in Q^\gamma_{i,i+1},\quad 1\leq i < d,
    \end{cases}
\end{equation}
where
\begin{align}
    Q_1^\gamma &= \set{q}{q< \tfrac\alpha2\left((1+ {2\gamma})u_1+u_2\right)},\\
    Q_i^\gamma &= \set{q}{\tfrac\alpha2\left(u_{i-1} + (1 + {2\gamma})u_i\right) < q < \tfrac\alpha2\left((1+ {2\gamma})u_i+u_{i+1}\right)}
    \quad \text{ for } 1<i<d,\\
    Q_d^\gamma &= \set{q}{\tfrac\alpha2\left(u_{d-1} + (1 + {2\gamma})u_d\right) < q},\\
    Q_{i,i+1}^\gamma &= \set{q}{\tfrac\alpha2\left((1 + {2\gamma})u_i+u_{i+1}\right) \leq q \leq \tfrac\alpha2\left(u_i+(1+ {2\gamma})u_{i+1}\right)} \quad\text{for } 1 \leq i < d.
\end{align}
Since $H_\gamma$ is a superposition operator defined by a Lipschitz continuous and piecewise differentiable scalar function, $H_\gamma$ is Newton-differentiable from $L^r(\Omega)\to L^2(\Omega)$ for any $r>2$; see, e.g., \cite[Example 8.12]{Kunisch:2008a} or \cite[Theorem 3.49]{Ulbrich:2011}. A Newton derivative at $p$ in direction $h$ is given pointwise almost everywhere by
\begin{equation}
    [D_N H_\gamma(p)h](x) =
    \begin{cases}
        \frac1\gamma h(x) & \text{if }p(x)\in Q^\gamma_{i,i+1},\quad 1\leq i < d,\\
        0 &\text{else.}
    \end{cases}
\end{equation}

Hence if the range of $K^*$ embeds into $L^r(\Omega)$ for some $r>2$ (which is the case, e.g., for many convolution operators and solution operators for partial differential equations) and the semismooth Newton step is uniformly invertible, the corresponding Newton iteration converges locally superlinearly. We address this for the concrete example considered in the next section. In practice, the local convergence can be addressed by embedding the Newton method into a continuation strategy, i.e., starting for $\gamma$ large and then iteratively reducing $\gamma$, using the previous solution as a starting point.

\section{Numerical examples}\label{sec:examples}

We illustrate the proposed approach for an inverse source problem for the Poisson equation, i.e., we choose $K=A^{-1}:L^2(\Omega)\to L^2(\Omega)$ for $\Omega=[0,1]^2$ and $A=-\Delta$ together with homogeneous boundary conditions. We note that since $\Omega$ is a Lipschitz domain, we have that $\ran A^{-*}=\ran A^{-1} = H^2(\Omega)\cap H^1_0(\Omega)$, and hence this operator satisfies the conditions discussed in \cref{sec:structure} that guarantee that $u_\alpha^\delta(x)\in\{u_1,\dots,u_d\}$ almost everywhere if $y^\delta\notin\ran K$; see \cite[Prop.~2.3]{CK:2013}.
For the computational results below, we use a finite element discretization on a uniform triangular grid with $256\times 256$ vertices.

The specific form of $K$ can be used to reformulate the optimality condition (and hence the Newton system) into a more convenient form. Introducing $y_\gamma = A^{-1}u_\gamma$ and eliminating $u_\gamma$ using the second relation of \eqref{eq:opt_reg}, we obtain as in \cite{CK:2013} the equivalent system
\begin{equation}\label{eq:opt_reg_ref}
    \left\{\begin{aligned}
            A^*p_\gamma + y_\gamma - y^\delta &=0,\\
            Ay_\gamma - H_\gamma(p_\gamma) &=0.
    \end{aligned}\right.
\end{equation}
Setting $V:=H^1_0(\Omega)$, we can consider this as an equation from $V\times V$ to $V^*\times V^*$, which due to the embedding $V\hookrightarrow L^p(\Omega)$ for $p>2$ provides the necessary norm gap for Newton differentiability of $H_\gamma$.
By the chain rule for Newton derivatives from, e.g., \cite[Lem.~8.4]{Kunisch:2008a}, the corresponding Newton step therefore consists of solving for $(\delta y,\delta p)\in V\times V$ given $(y^k,p^k)\in V\times V$ in
\begin{equation}\label{eq:newton_step}
    \begin{pmatrix} 
        \Id & A^*\\
        A & -D_N H_\gamma(p^k) 
    \end{pmatrix}
\begin{pmatrix}\delta y \\ \delta p\end{pmatrix}
    = 
    -\begin{pmatrix} A^*p^k +  y - y^\delta\\ Ay^k - H_\gamma( p^k)
    \end{pmatrix}
\end{equation}
and setting
\begin{equation}
    y^{k+1} = y^k + \delta y,\qquad p^{k+1} = p^k + \delta p.
\end{equation}
Note that the reformulated Newton matrix is symmetric, which in general is not the case for nonsmooth equations. Following \cite[Prop.~4.3]{CK:2013}, the Newton step \eqref{eq:newton_step} is uniformly boundedly invertible, from which local superlinear convergence to a solution of \eqref{eq:opt_reg_ref} follows. 

In practice, we include the continuation strategy described above as well as a simple backtracking line search based on the residual norm in \eqref{eq:opt_reg_ref} to improve robustness. Since the forward operator is linear and $H_\gamma$ is piecewise linear, the semi-smooth Newton method has the following finite termination property: If $H_\gamma(p^{k+1})=H_\gamma(p^k)$, then $(y^{k+1},p^{k+1})$ satisfy \eqref{eq:opt_reg_ref}; cf.~\cite[Rem.~7.1.1]{Kunisch:2008a}. We then recover $u^{k+1}=H_\gamma(p^{k+1})$. In the implementation, we also terminate if more than $100$ Newton iterations are performed, in which case the continuation is also terminated and the last successful iterate is returned. Otherwise we terminate if $\gamma<10^{-12}$. In all results reported below, the continuation is terminated successfully.
The implementation of this approach used to obtain the following results can be downloaded from \url{https://github.com/clason/discreteregularization}.

\bigskip

The first example illustrates the convergence behavior of the Tikhonov regularization. Here, the true parameter is chosen as
\begin{equation}\label{eq:exact_data_mb}
    \begin{aligned}[t]
        u^\dag(x) = u_1 &+ u_2 \,\chi_{\{x:(x_1 - 0.45)^2 + (x_2 - 0.55)^2 < 0.1\}}(x)\\
                        &+ (u_3-u_2)\,  \chi_{\{x:(x_1 - 0.4)^2 + (x_2 - 0.6)^2 < 0.02\}}(x)
    \end{aligned}
\end{equation}
for $(u_1,u_2,u_3)=(0,0.1,0.15)$; see \cref{fig:recon_15:exact}. (This might correspond to, e.g., material properties of background, healthy tissue, and tumor, respectively.)
The noisy data is constructed pointwise via
\begin{equation}
    y^\delta = y^\dag + (\tilde\delta \norm{y^\dag}_{\infty})\xi,
\end{equation}
where $\xi$ is a vector of identically and independently normally distributed random variables with mean $0$ and variance $1$, and $\tilde \delta \in\{2^0,\dots,2^{-20}\}$. For each value of $\tilde \delta$, the corresponding regularization parameter $\alpha$ is chosen according to the discrepancy principle \eqref{eq:morozov} with $\tau = 1.1$.
Details on the convergence history are reported in \cref{tab:recon_15}, which shows the effective noise level $\delta:=\norm{y^\delta-y^\dag}_2$, the parameter $\alpha$ selected as satisfying the Morozov discrepancy principle, the $L^2$-error $e_2:=\norm{u_{\alpha}^\delta-u^\dag}_2$ and the $L^\infty$-error $e_\infty:=\norm{u_\alpha^\delta-u^\dag}_\infty$. First, we note that the \emph{a posteriori} choice approximately follows the \emph{a priori} choice $\alpha \sim \delta$. Similarly, for larger values of $\delta$, the $L^2$-error behaves as $e_2\sim \delta$, which is no longer true for $\delta\to 0$ (and cannot be expected due to the nonsmooth regularization). The $L^\infty$-error $e_\infty$ is initially dominated by the jump in admissible parameter values: As long as there is a single point $x\in\Omega$ with $u_\alpha^\delta(x) = u_i \neq u_j = u^\dag(x)$, we necessarily have $e_\infty\geq\min_{1\leq i<d} u_{i+1}-u_{i}$. (Recall that we do not have a convergence rate and thus an error bound for pointwise convergence.) Later, $e_\infty$ becomes smaller than this threshold value, which indicates that apart from points in the regularized singular set (i.e., where $p_\gamma(x)\in Q^\gamma_{i,i+1}$, which in these cases happens for $~20$ out of $256\times256$ vertices), the reconstruction is exact. Here we point out that since $\gamma$ is independent of $\alpha$, the Moreau--Yosida regularization for fixed $\gamma$ becomes more and more active as $\alpha\to 0$. Nevertheless, in all cases $\gamma \ll \alpha$, and hence the multi-bang regularization dominates.

The pointwise convergence can also be seen clearly from \cref{fig:recon_15}, which shows the true parameter $u^\dag$ together with three representative reconstructions for different noise levels. It can be seen that for large noise, the corresponding large regularization suppresses the smaller inclusion; see \cref{fig:recon_15:4}. This is consistent with the discussion at the end of \cref{sec:structure}. For smaller noise, the inclusion is recovered well (\cref{fig:recon_15:7}), and for $\delta\approx 3.69\cdot 10^{-4}$, the reconstruction is visually indistinguishable from the true parameter (\cref{fig:recon_15:13}).

\begin{figure}[p]
    \vspace*{-0.25cm}
    \centering
    \begin{subfigure}[t]{0.495\textwidth}
        \includegraphics[width=\textwidth]{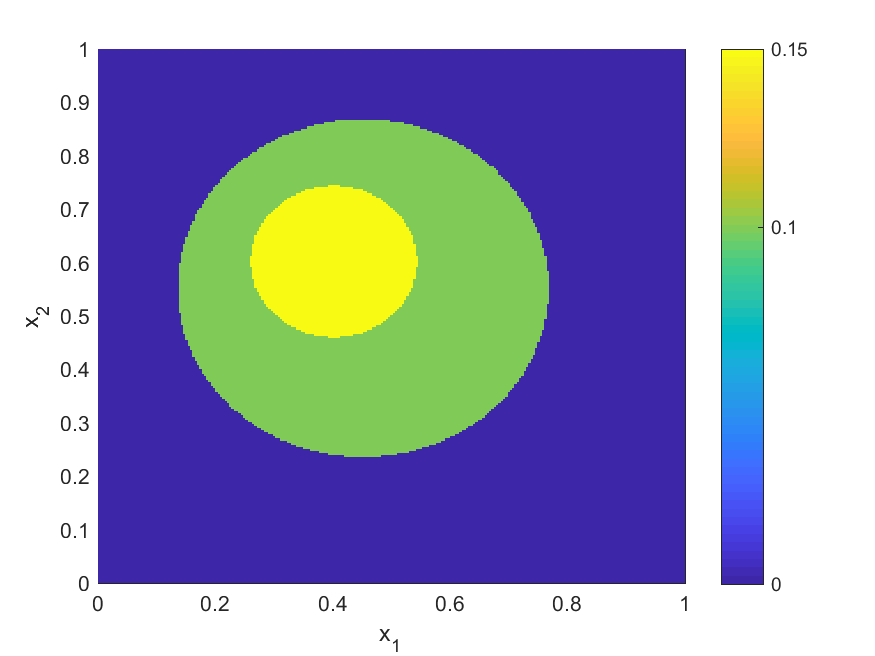}
        \caption{$u^\dag$}
        \label{fig:recon_15:exact}
    \end{subfigure}
    \hfill
    \begin{subfigure}[t]{0.495\textwidth}
        \includegraphics[width=\textwidth]{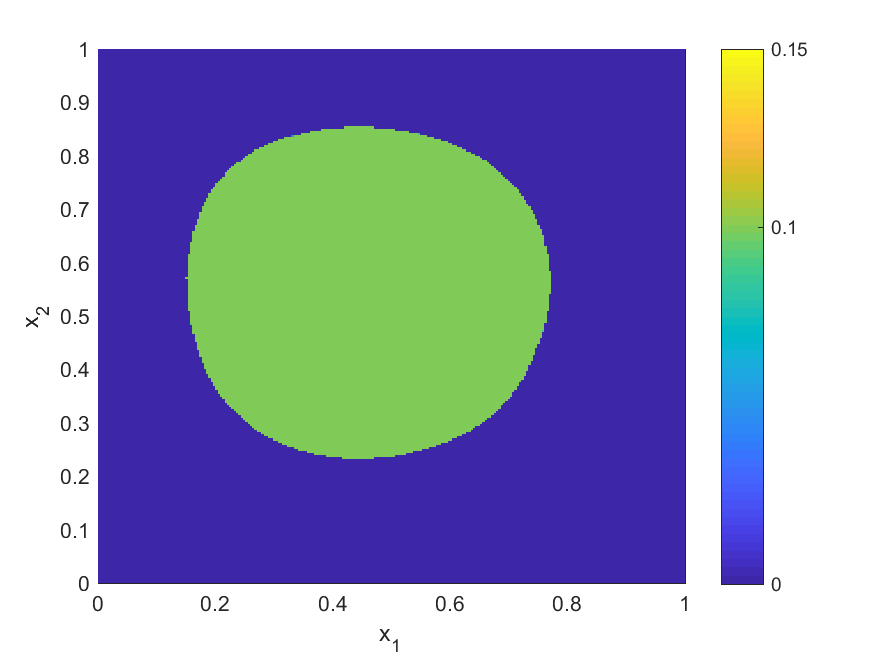}
        \caption{$u_\alpha^\delta$ for $\delta \approx1.89\cdot10^{-1}$}
        \label{fig:recon_15:4}
    \end{subfigure}

    \begin{subfigure}[t]{0.495\textwidth}
        \includegraphics[width=\textwidth]{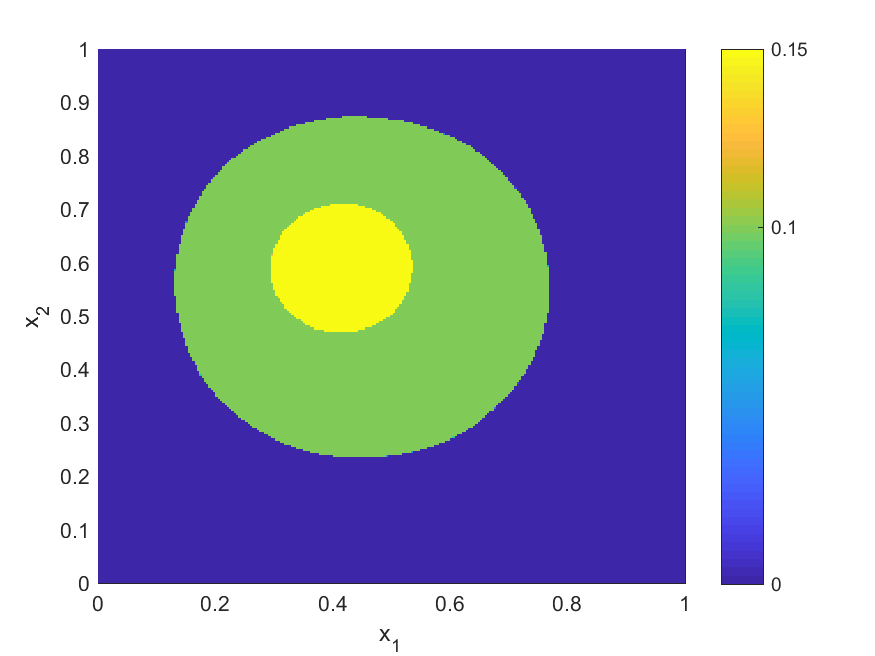}
        \caption{$u_\alpha^\delta$ for $\delta \approx 2.37\cdot10^{-2}$}
        \label{fig:recon_15:7}
    \end{subfigure}
    \hfill
    \begin{subfigure}[t]{0.495\textwidth}
        \includegraphics[width=\textwidth]{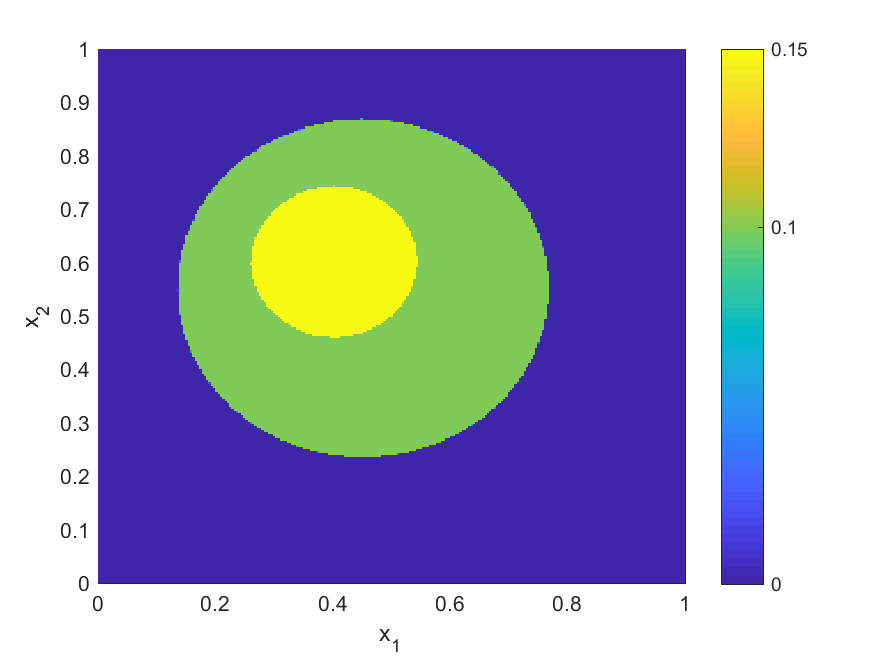}
        \caption{$u_\alpha^\delta$ for $\delta \approx 3.69\cdot10^{-4}$}
        \label{fig:recon_15:13}
    \end{subfigure}
    \caption{True parameter $u^\dag$ for $u_3=0.15$ and reconstructions $u_\alpha^\delta$ for different values of $\delta$}
    \label{fig:recon_15}
\end{figure}
\begin{table}[p]
    \caption{Convergence behavior as $\delta \to 0$ for $u_3=0.15$: noise level $\delta$, regularization parameter $\alpha$, $L^2$-error $e_2$, $L^\infty$-error $e_\infty$}\label{tab:recon_15}
    \begin{minipage}[t]{0.5\textwidth}
        \centering
        \begin{tabular}[t]{S S S S}
            \toprule
            {$\delta$}  & {$\alpha$} & {${e}_2$} & {$e_\infty$}\\
            \midrule
            1.52e+0     & 1.00e-2    & 1.60e+1   & 1.50e-1\\
            7.59e-1     & 1.25e-3    & 8.64e+0   & 1.00e-1\\
            3.78e-1     & 6.25e-4    & 6.18e+0   & 1.00e-1\\
            1.89e-1     & 3.13e-4    & 4.26e+0   & 1.00e-1\\
            9.48e-2     & 7.81e-5    & 4.32e+0   & 1.00e-1\\
            4.73e-2     & 3.91e-5    & 3.67e+0   & 1.00e-1\\
            2.37e-2     & 1.95e-5    & 2.97e+0   & 1.00e-1\\
            1.19e-2     & 9.77e-6    & 2.33e+0   & 1.00e-1\\
            5.90e-3     & 4.88e-6    & 1.76e+0   & 1.00e-1\\
            2.95e-3     & 2.44e-6    & 1.33e+0   & 1.00e-1\\
            1.49e-3     & 1.22e-6    & 9.47e-1   & 1.00e-1\\
            \bottomrule
        \end{tabular}
    \end{minipage}
    \hfill
    \begin{minipage}[t]{0.5\textwidth}
        \centering
        \begin{tabular}[t]{S S S S}
            \toprule
            {$\delta$}  & {$\alpha$} & {${e}_2$} & {$e_\infty$}\\
            \midrule
            7.44e-4     & 6.10e-7    & 6.86e-1   & 1.00e-1\\
            3.69e-4     & 3.05e-7    & 4.74e-1   & 1.00e-1\\
            1.85e-4     & 1.53e-7    & 2.91e-1   & 7.82e-2\\
            9.28e-5     & 7.63e-8    & 2.27e-1   & 7.67e-2\\
            4.64e-5     & 3.81e-8    & 1.29e-1   & 5.73e-2\\
            2.32e-5     & 1.91e-8    & 9.19e-2   & 4.91e-2\\
            1.16e-5     & 9.54e-9    & 9.32e-2   & 4.03e-2\\
            5.79e-6     & 4.77e-9    & 4.61e-2   & 2.30e-2\\
            2.89e-6     & 2.38e-9    & 1.13e-1   & 5.00e-2\\
            1.44e-6     & 5.96e-10   & 1.70e-2   & 4.39e-3\\\\
            \bottomrule
        \end{tabular}
    \end{minipage}
\end{table}

The behavior is essentially the same if we set $(u_1,u_2,u_3)=(0,0.1,0.11)$ in \eqref{eq:exact_data_mb} (i.e., a contrast of $10\%$ instead of $50\%$ for the inner inclusion), demonstrating the robustness of the multi-bang regularization; see \cref{fig:recon_11} and \cref{tab:recon_11}.

\begin{figure}[p]
    \vspace*{-0.25cm}
    \centering
    \begin{subfigure}[t]{0.495\textwidth}
        \includegraphics[width=\textwidth]{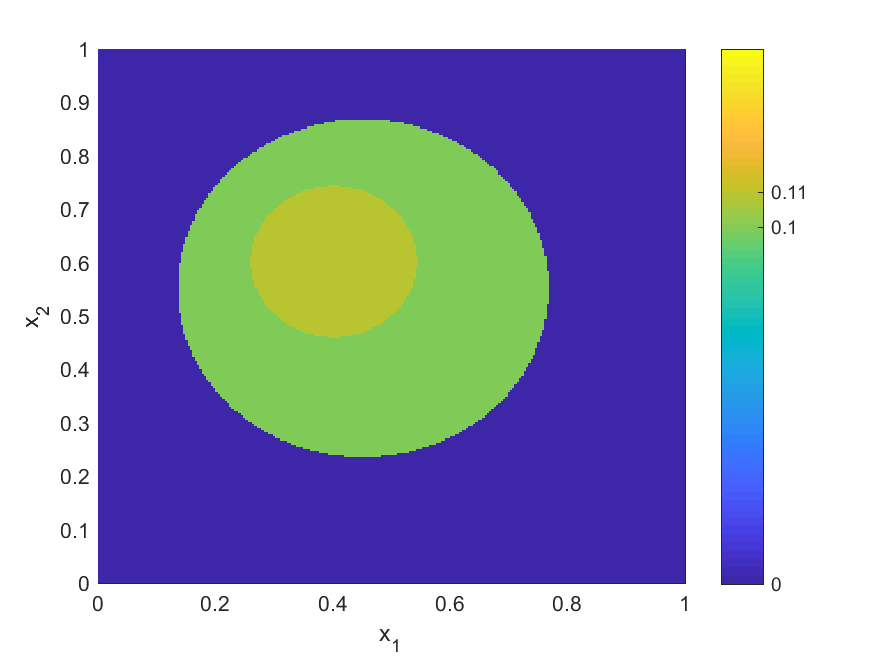}
        \caption{$u^\dag$}
        \label{fig:recon_11:exact}
    \end{subfigure}
    \hfill
    \begin{subfigure}[t]{0.495\textwidth}
        \includegraphics[width=\textwidth]{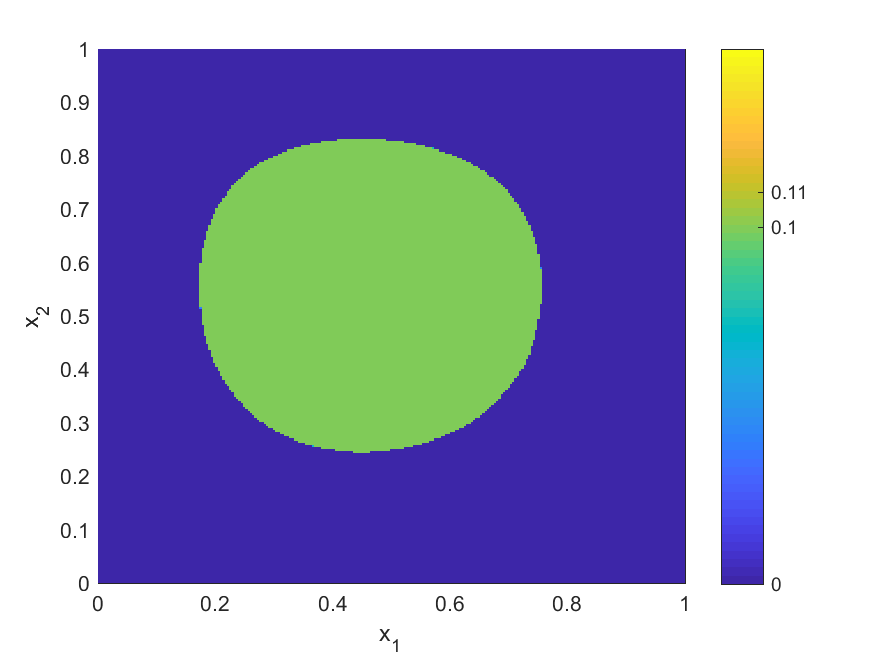}
        \caption{$u_\alpha^\delta$ for $\delta \approx1.68\cdot10^{-1}$}
        \label{fig:recon_11:4}
    \end{subfigure}

    \begin{subfigure}[t]{0.495\textwidth}
        \includegraphics[width=\textwidth]{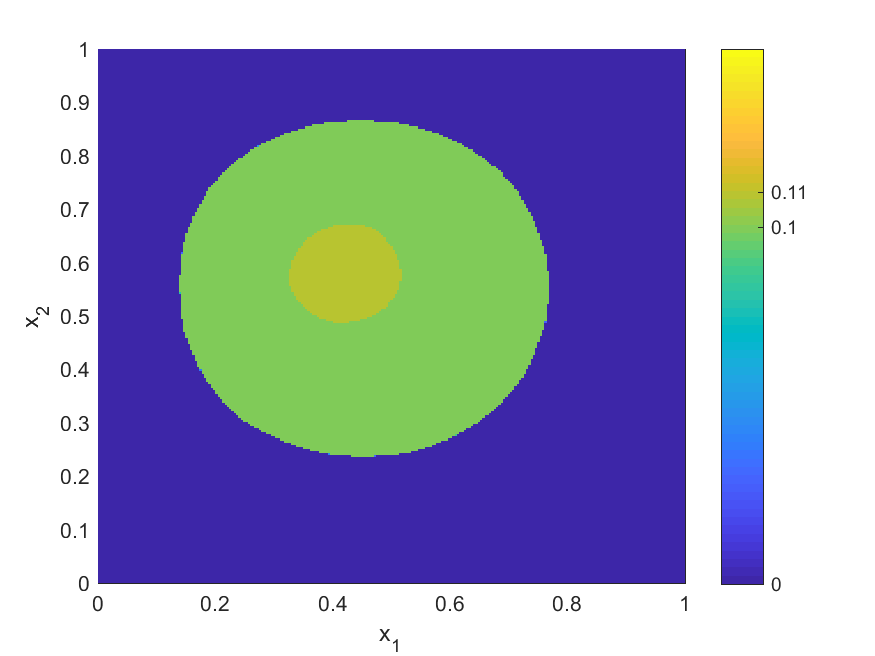}
        \caption{$u_\alpha^\delta$ for $\delta \approx 2.17\cdot10^{-2}$}
        \label{fig:recon_11:7}
    \end{subfigure}
    \hfill
    \begin{subfigure}[t]{0.495\textwidth}
        \includegraphics[width=\textwidth]{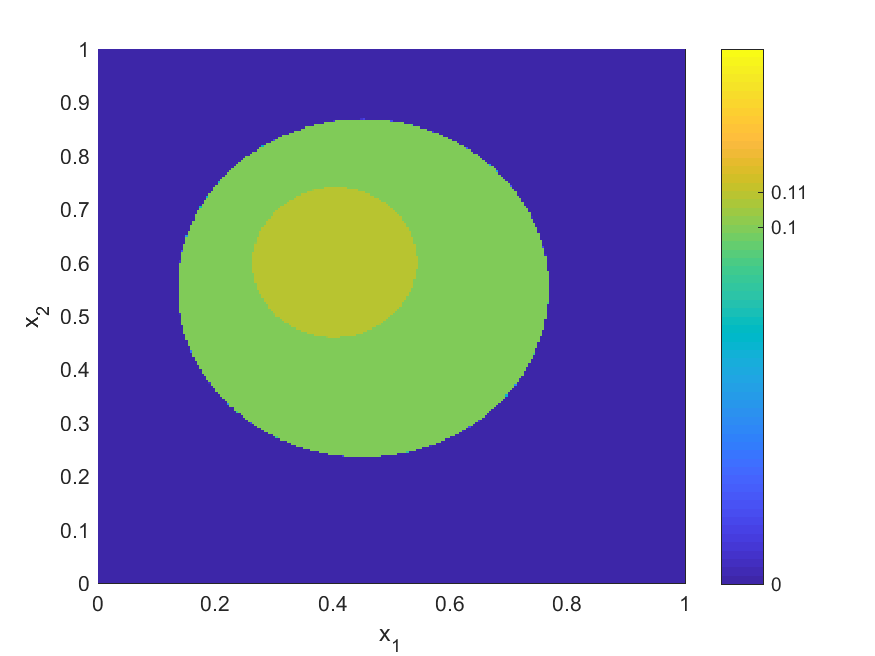}
        \caption{$u_\alpha^\delta$ for $\delta \approx 3.29\cdot10^{-4}$}
        \label{fig:recon_11:13}
    \end{subfigure}
    \caption{True parameter $u^\dag$ for $u_3=0.11$ and reconstructions $u_\alpha^\delta$ for different values of $\delta$}
    \label{fig:recon_11}
\end{figure}
\begin{table}[p]
    \caption{Convergence behavior as $\delta \to 0$ for $u_3=0.11$: noise level $\delta$, regularization parameter $\alpha$, $L^2$-error $e_2$, $L^\infty$-error $e_\infty$}\label{tab:recon_11}
    \begin{minipage}[t]{0.5\textwidth}
        \centering
        \begin{tabular}[t]{S S S S}
            \toprule
            {$\delta$}  & {$\alpha$} & {${e}_2$} & {$e_\infty$}\\
            \midrule
            1.34e+0     & 2.50e-3    & 1.16e+0   & 1.10e-1 \\
            6.73e-1     & 1.25e-3    & 9.13e+0   & 1.00e-1 \\
            3.36e-1     & 6.25e-4    & 6.89e+0   & 1.00e-1 \\
            1.68e-1     & 3.13e-4    & 4.91e+0   & 1.00e-1 \\
            8.41e-2     & 1.56e-4    & 3.27e+0   & 1.00e-1 \\
            4.20e-2     & 3.91e-5    & 1.90e+0   & 1.00e-1 \\
            2.17e-2     & 1.95e-5    & 1.57e+0   & 1.00e-1 \\
            1.05e-3     & 9.77e-6    & 1.19e+0   & 1.00e-1 \\
            5.25e-3     & 4.88e-6    & 9.81e-1   & 1.00e-1 \\
            2.64e-3     & 2.44e-6    & 8.14e-1   & 1.00e-1 \\
            1.32e-4     & 1.22e-6    & 6.70e-1   & 1.00e-1 \\
            \bottomrule
        \end{tabular}
    \end{minipage}
    \hfill
    \begin{minipage}[t]{0.5\textwidth}
        \centering
        \begin{tabular}[t]{S S S S}
            \toprule
            {$\delta$}  & {$\alpha$} & {${e}_2$} & {$e_\infty$}\\
            \midrule
            6.56e-4     & 6.10e-7    & 4.55e-1   & 1.00e-1 \\
            3.29e-4     & 3.05e-7    & 2.94e-1   & 1.00e-1 \\
            1.64e-4     & 1.53e-7    & 2.20e-1   & 6.15e-2 \\
            8.27e-5     & 7.63e-8    & 1.87e-1   & 8.55e-2 \\
            4.11e-5     & 3.81e-8    & 6.75e-2   & 3.35e-2\\
            2.07e-5     & 1.91e-8    & 4.34e-2   & 1.44e-2 \\
            1.03e-5     & 9.54e-9    & 3.72e-2   & 1.46e-2 \\
            5.12e-6     & 4.77e-9    & 3.29e-2   & 1.31e-2 \\
            2.56e-6     & 2.38e-9    & 3.85e-2   & 1.00e-2 \\
            1.29e-6     & 2.98e-10   & 1.65e-1   & 1.79e-2 \\\\
            \bottomrule
        \end{tabular}
    \end{minipage}
\end{table}

To illustrate the behavior if the true parameter does not satisfy the assumption $u^\dag\in\{u_1,\dots,u_d\}$ almost everywhere, we repeat the above for 
\begin{equation}\label{eq:exact_data_nonmb}
    \begin{aligned}[t]
        u^\dag(x) = u_1 &+ u_2 \,\chi_{\{x:(x_1 - 0.45)^2 + (x_2 - 0.55)^2 < 0.1\}}(x)\\
                        &+ (u_3-u_2)(1-x_1)\,  \chi_{\{x:(x_1 - 0.4)^2 + (x_2 - 0.6)^2 < 0.02\}}(x)
    \end{aligned}
\end{equation}
with $(u_1,u_2,u_3)=(0,0.1,0.12)$;
see \cref{fig:recon_12_lin:exact}. While for large noise level and regularization parameter value, the multi-bang regularization behaves as before (see \cref{fig:recon_12_lin:7}), the reconstruction for smaller noise and regularization (\cref{fig:recon_12_lin:13}) shows the typical checkerboard pattern expected from weak but not strong convergence; cf.~\cite[Rem.~4.2]{CK:2013}. Nevertheless, as $\delta\to 0$, we still observe convergence to the true parameter; see \cref{fig:recon_12_lin:21} and \cref{tab:recon_12}.

\begin{figure}[p]
    \vspace*{-0.25cm}
    \centering
    \begin{subfigure}[t]{0.495\textwidth}
        \includegraphics[width=\textwidth]{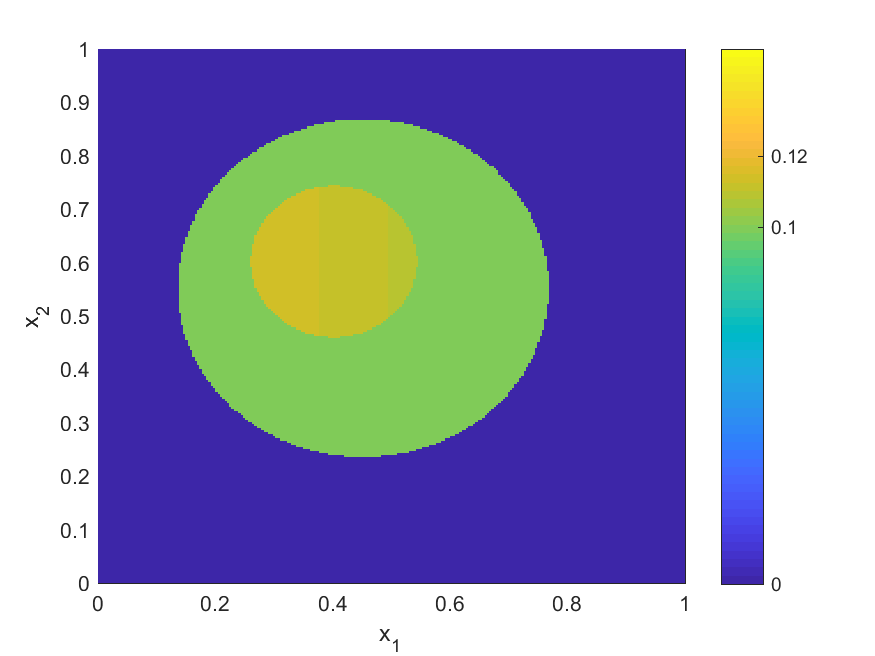}
        \caption{$u^\dag$}
        \label{fig:recon_12_lin:exact}
    \end{subfigure}
    \hfill
    \begin{subfigure}[t]{0.495\textwidth}
        \includegraphics[width=\textwidth]{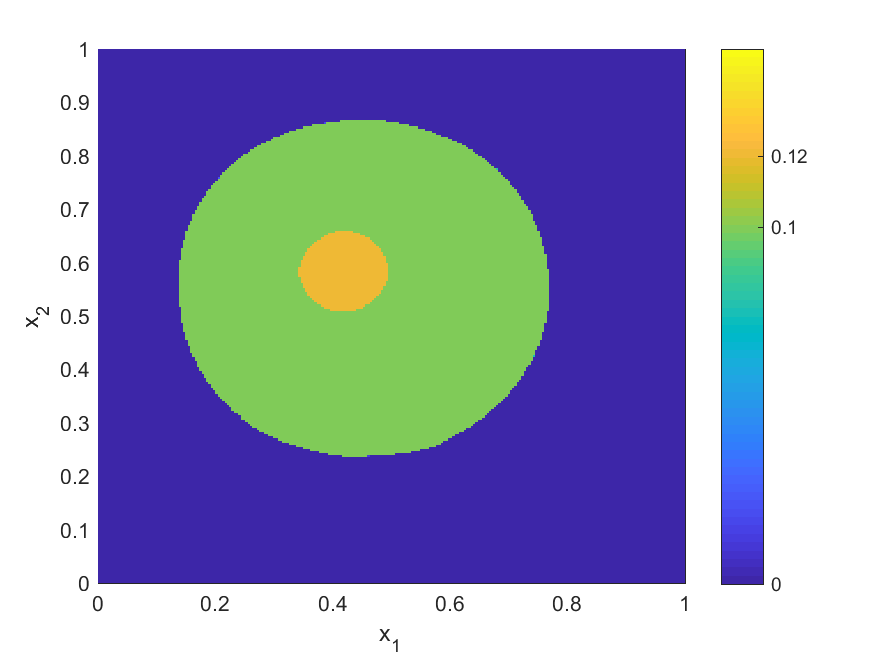}
        \caption{$u_\alpha^\delta$ for $\delta \approx 2.11\cdot10^{-2}$}
        \label{fig:recon_12_lin:7}
    \end{subfigure}

    \begin{subfigure}[t]{0.495\textwidth}
        \includegraphics[width=\textwidth]{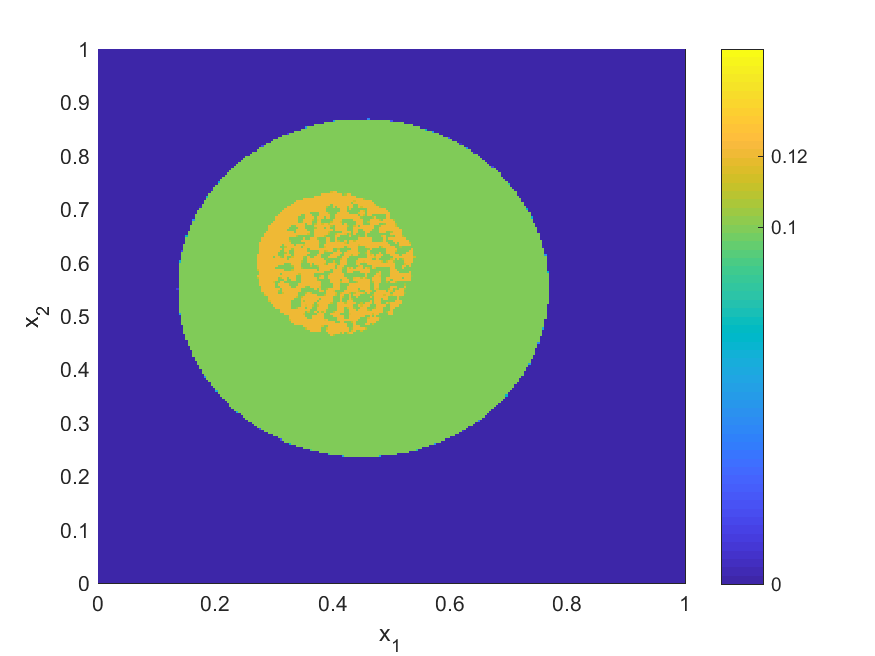}
        \caption{$u_\alpha^\delta$ for $\delta \approx 3.29\cdot10^{-4}$}
        \label{fig:recon_12_lin:13}
    \end{subfigure}
    \hfill
    \begin{subfigure}[t]{0.495\textwidth}
        \includegraphics[width=\textwidth]{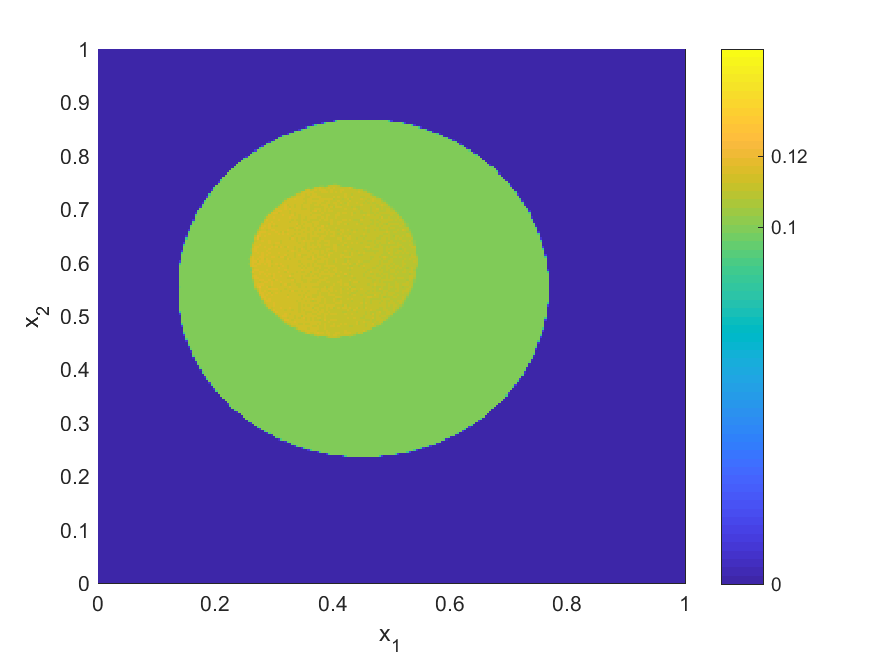}
        \caption{$u_\alpha^\delta$ for $\delta \approx 1.29\cdot10^{-6}$}
        \label{fig:recon_12_lin:21}
    \end{subfigure}
    \caption{True parameter $u^\dag$ and reconstructions $u_\alpha^\delta$ for different values of $\delta$}
    \label{fig:recon_12_lin}
\end{figure}
\begin{table}[p]
    \caption{Convergence behavior as $\delta \to 0$ for $u^\dag$: noise level $\delta$, regularization parameter $\alpha$, $L^2$-error $e_2$, $L^\infty$-error $e_\infty$}\label{tab:recon_12}
    \begin{minipage}[t]{0.5\textwidth}
        \centering
        \begin{tabular}[t]{S S S S}
            \toprule
            {$\delta$} & {$\alpha$} & {${e}_2$} & {$e_\infty$}\\
            \midrule
            1.36e+0    & 2.50e-3    & 1.17e+1   & 1.15e-1\\
            6.77e-1    & 1.25e-3    & 9.08e+0   & 1.00e-1\\
            3.39e-1    & 6.25e-4    & 6.84e+0   & 1.00e-1\\
            1.69e-1    & 3.12e-4    & 4.81e+0   & 1.00e-1\\
            8.48e-2    & 1.56e-4    & 3.12e+0   & 1.00e-1\\
            4.22e-2    & 3.91e-5    & 2.03e+0   & 1.00e-1\\
            2.11e-2    & 1.95e-5    & 1.67e+0   & 1.00e-1\\
            1.05e-2    & 9.77e-6    & 1.45e+0   & 1.00e-1\\
            5.29e-3    & 4.88e-6    & 1.29e+0   & 1.00e-1\\
            2.66e-3    & 2.44e-6    & 1.18e+0   & 1.00e-1\\
            1.32e-3    & 1.22e-6    & 9.82e-1   & 1.00e-1\\
            \bottomrule
        \end{tabular}
    \end{minipage}
    \hfill
    \begin{minipage}[t]{0.5\textwidth}
        \centering
        \begin{tabular}[t]{S S S S}
            \toprule
            {$\delta$}  & {$\alpha$} & {${e}_2$} & {$e_\infty$}\\
            \midrule
            6.60e-4     & 6.10e-7    & 8.46e-1   & 1.00e-1\\
            3.29e-4     & 1.53e-7    & 7.23e-1   & 1.00e-1\\
            1.66e-4     & 7.63e-8    & 6.20e-1   & 5.63e-2\\
            8.25e-5     & 3.81e-8    & 6.04e-1   & 5.60e-2\\
            4.12e-5     & 1.91e-8    & 5.69e-1   & 1.83e-2\\
            2.06e-5     & 9.54e-9    & 5.82e-1   & 5.60e-2\\
            1.03e-5     & 4.77e-9    & 4.95e-1   & 5.66e-2\\
            5.15e-6     & 2.38e-9    & 3.39e-1   & 1.47e-2\\
            2.58e-6     & 5.96e-10   & 2.70e-1   & 2.61e-2\\
            1.29e-6     & 3.73e-11   & 1.65e-1   & 1.48e-2\\\\
            \bottomrule
        \end{tabular}
    \end{minipage}
\end{table}

Finally, we address the qualitative dependence of the reconstruction on the regularization parameter $\alpha$. \Cref{fig:recon_d05} shows reconstructions for the true parameter $u^\dag$ from \eqref{eq:exact_data_mb} again with $(u_1,u_2,u_3)=(0,0.1,0.15)$ for an effective noise level $\delta \approx 0.759$ and different values of $\alpha$. First, \cref{fig:recon_d05:3} presents the reconstruction for the value $\alpha=1.25\cdot10^{-3}$, where as before the volume corresponding to $u_2$ is reduced and the inner inclusion corresponding to $u_3$ is suppressed completely. If the parameter is chosen smaller as $\alpha = 10^{-4}$, however, the reconstruction of the outer volume is essentially correct, while the inner inclusion -- although reduced -- is also localized well; see \cref{fig:recon_d05:4}. Visually, this value yields a better reconstruction than the one obtained by the discrepancy principle. The trade-off is a loss of spatial regularity, manifested in more irregular level lines, which becomes even more pronounced for smaller $\alpha = 10^{-5}$; see \cref{fig:recon_d05:5}. This behavior is surprising insofar that the pointwise definition of the multi-bang penalty itself imposes no spatial regularity on the reconstruction at all; as is evident from \eqref{eq:optimality}, any regularity of the solution $\bar u$ is solely due to that of the level sets of $\bar p$ (which in this case has the regularity of a solution to a Poisson equation).

\begin{figure}[t]
    \vspace*{-0.25cm}
    \centering
    \begin{subfigure}[t]{0.495\textwidth}
        \includegraphics[width=\textwidth]{recon_15_exact.png}
        \caption{$u^\dag$}
        \label{fig:recon_d05:exact}
    \end{subfigure}
    \hfill
    \begin{subfigure}[t]{0.495\textwidth}
        \includegraphics[width=\textwidth]{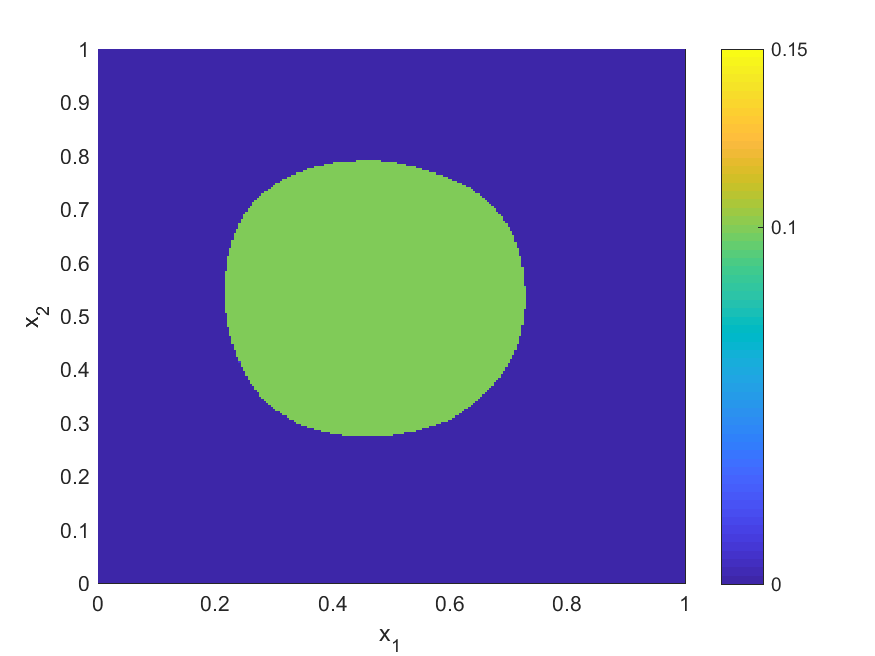}
        \caption{$u_\alpha^\delta$ for $\alpha =1.25\cdot10^{-3} $}
        \label{fig:recon_d05:3}
    \end{subfigure}

    \begin{subfigure}[t]{0.495\textwidth}
        \includegraphics[width=\textwidth]{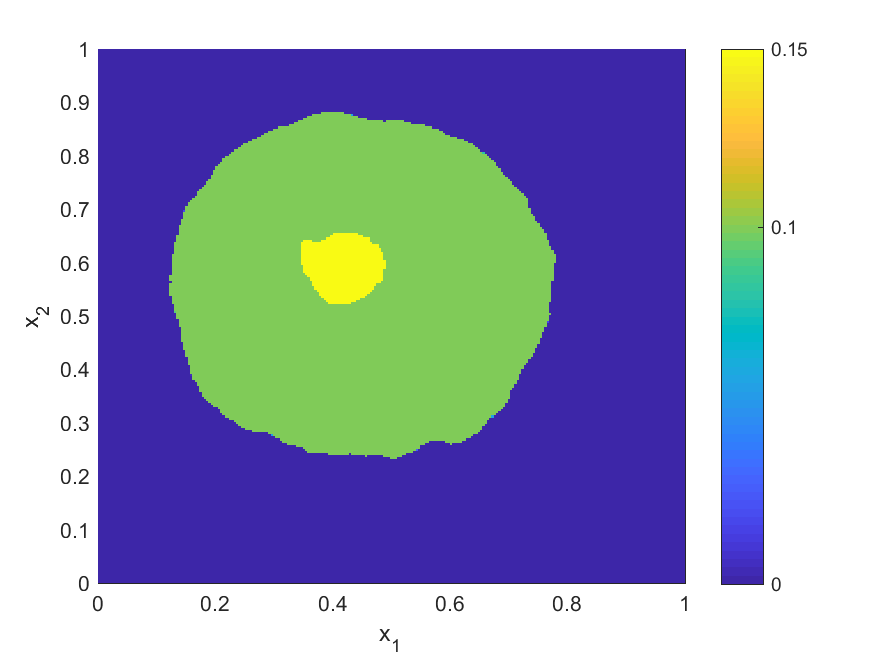}
        \caption{$u_\alpha^\delta$ for $\alpha = 10^{-4}$}
        \label{fig:recon_d05:4}
    \end{subfigure}
    \hfill
    \begin{subfigure}[t]{0.495\textwidth}
        \includegraphics[width=\textwidth]{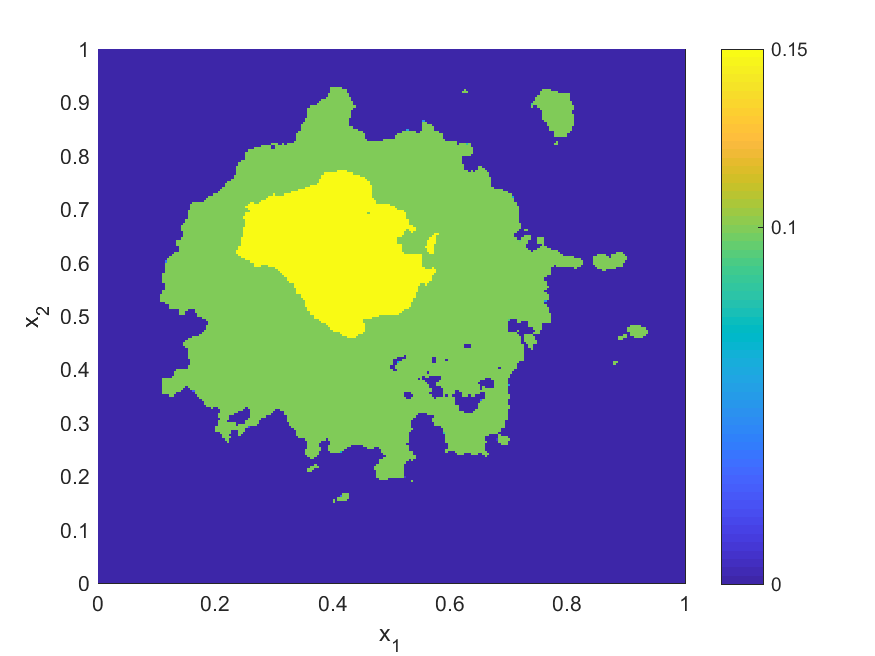}
        \caption{$u_\alpha^\delta$ for $\alpha = 10^{-5}$}
        \label{fig:recon_d05:5}
    \end{subfigure}
    \caption{True parameter $u^\dag$ and reconstructions $u_\alpha^\delta$ for $u_3=0.15$, $\delta \approx 7.59\cdot10^{-1}$, and different $\alpha$}
    \label{fig:recon_d05}
\end{figure}

\section{Conclusion}

Reconstructions in inverse problems that take on values from a given discrete admissible set can be promoted via a convex penalty that leads to a convergent regularization method. While convergence rates can be shown with respect to the usual Bregman distance, if the true parameter to be reconstructed takes on values only from the admissible set, the convergence (albeit without rates) is actually pointwise. A semismooth Newton method allows the efficient and robust computation of Tikhonov minimizers.

This work can be extended in several directions. First, \cref{fig:recon_d05} demonstrates that regularization parameters chosen according to the discrepancy principle are not optimal with respect to the visual reconstruction quality. This motivates the development of new, heuristic, parameter choice rules that are adapted to the discrete-valued, pointwise, nature of the multi-bang penalty. 
It would also be interesting to investigate whether an active set condition in the spirit of \cite{Wachsmuth:2011a,Wachsmuth:2011b} based on \eqref{eq:ass_p} can be used to obtain strong or pointwise convergence rates.
A natural further step is the extension to nonlinear parameter identification problems, making use of the results of \cite{CK:2015}.
Finally, \cref{fig:recon_d05:4,fig:recon_d05:5} suggest combining the multi-bang penalty with a total variation penalty to also promote regularity of the level lines of the reconstruction. The resulting problem is challenging both analytically and numerically, but would open up the possibility of application to electrical impedance tomography, which can be formulated as parameter identification problem for the diffusion coefficient in an elliptic equation.

\section*{Acknowledgments}
This work was supported by the German Science Fund (DFG) under grant CL 487/1-1. The authors also wish to thank Daniel Wachsmuth for several helpful remarks.

\printbibliography
\end{document}